\definecolor{lightgrey}{rgb}{.804,.804,.756}
\definecolor{myred}{rgb}{.545,0,0}
\definecolor{myblue}{rgb}{.024,.15,.645}    
\definecolor{mygreen}{rgb}{0,.455,0} 
\definecolor{mygrey}{rgb}{.8,.8,.8}
\definecolor{uuuuuu}{rgb}{0,0,0}
\definecolor{ccqqqq}{rgb}{0.8,0.,0.}
\definecolor{qqqqcc}{rgb}{0.,0.,0.8}    
\definecolor{dcrutc}{rgb}{0.86,0.09,0.24}
\definecolor{ffttcc}{rgb}{1.,0.2,0.8}
\definecolor{wwzzff}{rgb}{0.4,0.6,1.}
\definecolor{qqttzz}{rgb}{0.,0.2,0.6}   
	\theoremstyle{plain}
\newtheorem{thm}{Theorem}[section]
\newtheorem{lem}[thm]{Lemma}
\newtheorem{pro}[thm]{Proposition}
\newtheorem*{conjecture*}{Conjecture}
	\theoremstyle{definition}
\newtheorem{defn}[thm]{Definition}
\newtheorem{exa}[thm]{Example}
	\theoremstyle{remark}
\newtheorem{rem}[thm]{Remark}
\setlist{nolistsep}
\newcommand{\NN}{\mathbb{N}}
\newcommand{\ZZ}{\mathbb{Z}}
\newcommand{\RR}{\mathbb{R}}
\newcommand{\SSS}{\mathbb{S}}
\newcommand{\wA}{\widetilde{A}}
\newcommand{\HK}{\operatorname{HK}}
\newcommand{\HKC}{\mathsf{C}_n}
\newcommand{\HKL}{\mathsf{L}_n}
\newcommand{\DHKC}{\mathsf{D}\mathsf{C}_n}
\newcommand{\DHKL}{\mathsf{D}\mathsf{L}_n}
\newcommand{\WHKC}{\mathsf{W}\mathsf{C}_n}
\newcommand{\WHKL}{\mathsf{W}\mathsf{L}_n}
\newcommand{\Id}{\operatorname{Id}}
\newcommand{\II}{\operatorname{II}_n}
\newcommand{\IIC}{\operatorname{IIC}_n}
\newcommand{\stkout}[1]{\ifmmode\text{\sout{\ensuremath{#1}}}\else\sout{#1}\fi}
\newcommand{\Perm}{S_{n+1}^{\stkout{321}}}
\newcommand{\geqrot}{\mathbin{\rotatebox[origin=c]{-90}{$\geqslant$}}}
\begin{document}

\title[]{The word problem for Hecke--Kiselman monoids \\ of type $A_n$ and $\widetilde{A}_n$}

\begin{abstract}
We exhibit explicit and easily realisable bijections between Hecke--Kiselman monoids of type $A_n$/$\widetilde{A}_n$; certain braid diagrams on the plane/cylinder; and couples of integer sequences of particular types. This yields a fast solution of the word problem and an efficient normal form for these HK monoids. Yang--Baxter type actions play an important role in our constructions.
\end{abstract}

\keywords{Hecke--Kiselman monoid, Catalan monoid, (affine) braid monoid}

\subjclass[2000]{%2020
 20M05, %Free semigroups, generators and relations, word problems
 05A05, %Permutations, words, matrices
 20F36, %Braid groups; Artin groups 
 20M30, %Representation of semigroups; actions of semigroups on sets
 16T25. %Yang-Baxter equations
}

\author{Victoria Lebed}
\address{LMNO, Universit\'e de Caen--Normandie, BP 5186, 14032 Caen Cedex, France}
\email{lebed@unicaen.fr}

\maketitle

\section{Introduction}

In this paper we study the word problem for two closely related monoids $\HKL$ and $\HKC$ (with $n \in \NN$), defined by generators and relations as follows. The generators are $x_i, \ 1 \leqslant i \leqslant n$, and the relations are
\begin{align}
&x_i^2=x_i, &&1 \leqslant i \leqslant n, \label{E:idempot}\\
&x_ix_j=x_jx_i, && 1< i-j <n(-1),\label{E:FarComm}\\
&x_ix_{i+1}x_i = x_{i+1}x_ix_{i+1} = x_ix_{i+1}, &&1 \leqslant i < n (+1).\label{E:braid}
\end{align}
Both $(-1)$ and $(+1)$ are omitted for $\HKL$, and preserved for $\HKC$. Thus to get $\HKC$ from $\HKL$ one replaces one far-commutativity relation \eqref{E:FarComm} with one braid-like relation \eqref{E:braid}. For the generators of $\HKC$, the subscripts are taken modulo $n$; thus, $x_{n+1}$ means  $x_1$. 

These monoids are particular cases of Hecke--Kiselman monoids $\HK_{\Theta}$ \cite{GaMa}, defined for any partially oriented graph $\Theta$. The monoid $\HKL$ corresponds to the linearly oriented chain $A_n$ (hence the L in the name we chose), and $\HKC$ to the linearly oriented cycle $\wA_n$ (hence the C in the name, which goes back at least to \cite{MeOk}). HK monoids are quotients of $0$-Hecke monoids (to get those, remove the $= x_ix_{i+1}$ part from the relations), which are themselves quotients of Artin--Tits monoids, useful in the study of the representation theory thereof. The $= x_ix_{i+1}$ part of \eqref{E:braid} comes from Kiselman monoids from convexity theory \cite{Ki,KuMa}. HK monoids have applications to computer simulations via discrete sequential dynamical systems \cite{CoDA}; and to the representation theory of the path algebra of the quiver $\Theta$ via projection functors \cite{Gre,GreMa,GreMa2}.

As shown in \cite{So}, and later more explicitly and with a simpler proof in \cite{GaMa}, $\HKL$ is isomorphic to the Catalan monoid $CM_{n+1}$. This is the monoid of all order-preserving order-decreasing transformations of the set $\{1, 2, \ldots, n + 1\}$, appearing in diverse combinatorial and representation-theoretic contexts. This solves the word problem for $\HKL$, and identifies its size as the Catalan number $C_{n+1}=\frac{1}{n+2}\  {{2n+2}\choose{n+1}}$. 

In this paper we establish bijections between:

\newpage
\begin{enumerate}[label=(\roman*)]
\item the elements of $\HKL$;
\item\label{I:i1} positive braids on $n+1$ strands without bigons nor triangles (that is, any two strands intersect at most once, and no triples of strands intersect pairwise);
\item\label{I:ii1} increasing couples of increasing integer sequences bounded by $1$ and $n+1$, that is, $2k$ integers, for any $0 \leqslant k \leqslant n$, satisfying the inequalities
\begin{center}
\begin{tabular}{*{11}{>$c<$}}
 &  & b_1 & < & b_2 & < & \ldots & < & b_k & \leqslant & n+1 \\
 &  & \vee &  & \vee &  & \ldots &  & \vee & & \\
 1 & \leqslant & a_1 & < & a_2 & < & \ldots & < & a_k & & 
\end{tabular}
\end{center}
\item the set of $321$-avoiding permutations from $S_{n+1}$.
\end{enumerate}
Our bijections are explicit, and proofs are self-contained. We also recall a bijection between the sequences from \ref{I:ii1} and the Catalan monoid $CM_{n+1}$, which reflects a deeper connection between the two \cite{MarStein}. The cardinality of $CM_{n+1}$ being $C_{n+1}$, we recover not-so-widely-known interpretations of Catalan numbers, given by counting the braids from \ref{I:i1}, the sequences from \ref{I:ii1}, or $321$-avoiding permutations. 

The monoids $\HKL$ serve us as a toy model for studying the $\HKC$, crucial for understanding $\HK_{\Theta}$ for a general graph $\Theta$. We thus start with the simpler case $\HKL$, which makes often technical constructions for $\HKC$ more intuitive. 

The word problem for $\HKC$ was solved in \cite{MeOk} by exhibiting a finite Gr\"{o}bner basis. That solution was reformulated in terms of confluent reductions in \cite{ArDA19}. Further, the reduced form (with respect to the Gr\"{o}bner basis from \cite{MeOk}) of almost all the elements of $\HKC$ was given in \cite{OkWie}. This was used to show that the algebra $K\left[\HKC\right]$, where $K$ is a field, is Noetherian, and then to classify all graphs $\Theta$ for which $K\left[\HK_{\Theta}\right]$ is Noetherian.

Our main results are the following bijections, inspired by the case of $\HKL$:
\begin{enumerate}[label=(\roman*)]
\item\label{I:i} the elements of $\HKC$;
\item\label{I:ii} positive braids on $n$ strands on a cylinder generated by the elementary crossings, without contractible bigons nor contractible triangles (cf. Fig.~\ref{P:Contractible});
\item\label{I:iii} $n$-close increasing couples of increasing integer sequences, that is, $2k$ integers, for any $0 \leqslant k < n$, satisfying the inequalities
\begin{center}
\begin{tabular}{*{11}{>$c<$}}
 &  & b_1 & < & b_2 & < & \ldots & < & b_k & < & b_1+n \\
 &  & \vee &  & \vee &  & \ldots &  & \vee & & \\
 1 & \leqslant & a_1 & < & a_2 & < & \ldots & < & a_k & \leqslant & n
\end{tabular}
\end{center}
\end{enumerate}
To construct such a sequence couple for a word $x$ in the generators $x_i$, we develop an algorithm linear in the number of letters in $x$. This efficiently solves the word problem for $\HKC$. In the opposite direction, from such a sequence couple we deduce a word in the $x_i$, yielding preferred representatives of all the elements of $\HKC$, different from the reduced forms from \cite{OkWie}. The diagrammatic interpretation \ref{I:ii} is less useful in practice, but crucial in our proof of the bijection \ref{I:i} $\leftrightarrow$ \ref{I:iii}.

To compare elements in $\HKL$ or $\HKC$, we make these monoids act on the powers of certain sets, generalising the action from \cite{ArDA13} crucial in \cite{OkWie}. Multiple examples of such actions are given.

\section{Yang--Baxter like actions}

In this section we make the monoid $\HKL$ or $\HKC$ act on the powers $A^{n+1}$ or $A^n$ of a set $A$ locally, that is, the generator $x_i$ affects only the components $i$ and $i+1$ of $A^{\bullet}$. This generalisation of the actions by Yang--Baxter operators was considered, in the case of braid groups, in \cite{Ito}. Diverse examples are given. One of them will further be shown to be faithful, and play a key role in our arguments.

\begin{defn}
An \emph{$\HKL$-chain} (resp., \emph{$\HKC$-chain}) on a set $A$ is a collection of idempotent maps $\sigma_i \colon A \times A \to A \times A$, $i=1,\ldots,n$, satisfying the relation
\begin{align}\label{E:Chain}
(\sigma_i \times \Id_A)(\Id_A \times \sigma_{i+1})(\sigma_i \times \Id_A)
&=(\Id_A \times \sigma_{i+1})(\sigma_i \times \Id_A)(\Id_A \times \sigma_{i+1})\\
&=(\sigma_i \times \Id_A)(\Id_A \times \sigma_{i+1})\notag
\end{align}
for all $1 \leqslant i < n$ (resp., for all $1 \leqslant i \leqslant n$). As usual, we set $\sigma_{n+1}=\sigma_1$.
\end{defn}

When all the $\sigma_i$s coincide ($=\sigma$), one recovers the notion of an idempotent Yang--Baxter operator satisfying the Kiselman property $(\Id_A \times \sigma)(\sigma \times \Id_A)(\Id_A \times \sigma)=(\sigma \times \Id_A)(\Id_A \times \sigma)$.

\begin{pro}
Let $\sigma_1, \ldots, \sigma_n$ be an $\HKL$-chain (resp., $\HKC$-chain) on $A$. Then an action of the monoid $\HKL$ (resp., $\HKC$) on $A^{n+1}$ (resp., $A^n$) can be defined as follows: 
\begin{align*}
x_i &\mapsto \Id_A^{i-1} \times \sigma_i \times \Id_A^{n-i} \qquad \text{for all }i
\end{align*}
in the case $\HKL$, and 
\begin{align}
x_i &\mapsto \Id_A^{i-1} \times \sigma_i \times \Id_A^{n-i-1} \qquad \text{for all }i<n,\label{E:usual}\\
x_n &\mapsto \theta^{-1}(\sigma_n \times \Id_A^{n-2})\theta\label{E:last}
\end{align}
in the case $\HKC$. Here the map $\theta$ permutes the components of $A^n$ by moving the last component to the beginning.
\end{pro}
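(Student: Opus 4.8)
The plan is to use the standard fact that a (left) action of a monoid presented by generators and relations is the same as a homomorphism from that monoid into the endomorphism monoid of the underlying set, and that such a homomorphism exists precisely when the proposed images of the generators satisfy all the defining relations. So, writing $X_i$ for the endomorphism of $A^{n+1}$ (resp. $A^n$) assigned to $x_i$, it suffices to verify relations \eqref{E:idempot}, \eqref{E:FarComm}, \eqref{E:braid}. Before doing so I would record a uniform description of the generators in the $\HKC$ case: unwinding the conjugation in \eqref{E:last} shows that $X_n$ sends $(a_1, \dots, a_n)$ to $(c, a_2, \dots, a_{n-1}, b)$ with $(b,c) = \sigma_n(a_n, a_1)$; that is, every $X_i$ (including $i = n$) applies $\sigma_i$ to the ordered pair of components $(i, i+1 \bmod n)$ and fixes all other components. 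This makes the \emph{support} of $X_i$ equal to $\{i, i+1 \bmod n\}$.

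Relation \eqref{E:idempot} is then immediate: tensoring, and conjugating by the bijection $\theta$, preserves idempotency, so $X_i^2 = X_i$ because $\sigma_i^2 = \sigma_i$. For far-commutativity \eqref{E:FarComm}, I would observe that the index condition is exactly the condition that the supports $\{i, i+1 \bmod n\}$ and $\{j, j+1 \bmod n\}$ be disjoint; after reordering the components as (support of $X_i$) $\sqcup$ (support of $X_j$) $\sqcup$ (rest), the two maps become $f \times \Id \times \Id$ and $\Id \times g \times \Id$, which manifestly commute.

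The substance is relation \eqref{E:braid}. For an index $i$ whose three relevant components $i, i+1, i+2$ do not wrap around (all indices in the $\HKL$ case, and $i < n-1$ in the $\HKC$ case), $X_i$ and $X_{i+1}$ both fix every component outside $\{i, i+1, i+2\}$ and act on those three exactly as $\sigma_i \times \Id$ and $\Id \times \sigma_{i+1}$ act on $A^3$; hence the three displayed products in \eqref{E:braid} restrict to the three products in the chain relation \eqref{E:Chain}, which are equal by hypothesis. The only cases left are the two wrap-around braid relations of $\HKC$, namely $i = n$ (involving $X_n$ and $X_1$, components $n,1,2$) and $i = n-1$ (involving $X_{n-1}$ and $X_n$, components $n-1, n, 1$). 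Here I would conjugate the whole relation by a suitable power of $\theta$: since $\theta$ is a bijection, conjugation by it is an automorphism of the endomorphism monoid, and conjugation by $\theta^{\,k}$ shifts every support by $k$ modulo $n$. Choosing $k = 1$ for $i = n$ and $k = 2$ for $i = n-1$ brings the three cyclically consecutive components into the standard positions $1, 2, 3$ while turning $X_i, X_{i+1}$ into maps of the shapes $\sigma_i \times \Id^{n-2}$ and $\Id \times \sigma_{i+1} \times \Id^{n-3}$ with the correct slot assignment; the braid relation then becomes exactly the chain relation \eqref{E:Chain} for index $i = n$ (with $\sigma_{n+1} = \sigma_1$) or $i = n-1$, tensored with identities.

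I expect the main obstacle to be purely bookkeeping in these wrap-around cases: one must check that conjugation by $\theta$ really carries $X_i$ to the map applying $\sigma_i$ to the shifted pair of components in the correct order (first slot to the lower-indexed, second to the higher-indexed component), so that the resulting equation is literally \eqref{E:Chain} and not a variant with the two tensor slots, or the two outputs of $\sigma_i$, transposed. Once the uniform ``$X_i = \sigma_i$ on components $(i, i+1 \bmod n)$'' description is established and checked against \eqref{E:last}, everything else is a routine reduction to \eqref{E:Chain}.
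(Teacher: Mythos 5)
Your proposal is correct and matches the paper's approach: the paper's entire proof is the one-line remark that relations \eqref{E:idempot}--\eqref{E:braid} hold by easy inspection, and your argument is exactly that inspection carried out in full (the uniform support description, the reduction of the wrap-around braid relations to \eqref{E:Chain} via conjugation by powers of $\theta$, and the slot-order check for \eqref{E:last} are all verified correctly).
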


Despite appearances, the assignment \eqref{E:last} is of the same nature as \eqref{E:usual}: arranging the components of $A^n$ on the circle rather than a line, it says that $\sigma_n$ should be applied to components $n$ and $1$ of $A^n$, which now become neighbours.

\begin{proof}
Relations \eqref{E:idempot}--\eqref{E:braid} hold true by an easy inspection.
\end{proof}

\begin{exa}
The identities $\sigma_i(a,b)=(a,b)$ yield an $\HKL$- or an $\HKC$-chain on any set.
\end{exa}

\begin{exa}
Generalising the preceding example, one can put $\sigma_i(a,b)=(a,p_i(b))$, where $p_i\colon A \to A$ are any projectors: $p_i^2=p_i$.
\end{exa}

\begin{exa}\label{EX:NiceAction}
Consider a set $A$ and $n$ maps $f_i \colon A \to A$. Then the maps $\sigma_i(a,b)=(a,f_i(a))$ form an $\HKL$-chain and a $\HKC$-chain. Indeed, the idempotence is obvious, and the three parts of \eqref{E:Chain} applied to a triple $(a,b,c)$ all yield $(a, f_i(a), f_{i+1}f_i(a))$. Taking $A=\ZZ$, $f_i=\Id_{\ZZ}$ for $i<n$ and $f_n(a)=a+1$, we get a mirror version of the $\HKC$-actions from \cite[Proof of Lemma 2.6]{ArDA13}. Taking $A=\ZZ$, $f_i=\Id_{\ZZ}$ for $i \leqslant n$, and considering the action on the element $(1,2,\ldots,n,n+1) \in A^{n+1}$, one recovers the isomorphism between $\HKL$ and the Catalan monoid from \cite{GaMa}.
\end{exa}

\begin{exa}
Consider a monoid $A$ and $n$ monoid homomorphisms $f_i \colon A \to A$. Then the maps $\sigma_i(a,b)=(1,f_i(a)b)$ form an $\HKL$-chain and a $\HKC$-chain. Indeed, the idempotence follows from $f_i(1)=1$, and the three parts of \eqref{E:Chain} applied to a triple $(a,b,c)$ all yield $(1, 1, f_{i+1}f_i(a)f_{i+1}(b)c)$. When all the $f_i$ are equal, one recovers the idempotent Yang--Baxter operators from \cite{StVo}, which generalise the $f_i=\Id_A$ case from \cite{Leb}.
\end{exa}

\begin{exa}
Let the set $A$ be endowed with an associative operation $*$ satisfying the absorption property $a*(a*b)=a*b$. The set $\ZZ$ with the operation $\min$ or $\max$ is an elementary example. Then the maps $\sigma_i(a,b)=(a,a*b)$ form an $\HKL$-chain and a $\HKC$-chain. Indeed, the idempotence follows from the absorption property, and the three parts of \eqref{E:Chain} applied to a triple $(a,b,c)$ all yield $(a, a*b, a*b*c)$.
\end{exa}

\section{From HK monoids to diagrams}

This section presents a diagrammatic version of the monoids $\HKL$ and $\HKC$, inspired by the classical braid interpretation of braid monoids. As in all subsequent sections, we start with the more intuitive $\HKL$ case, and then adapt our arguments to the $\HKC$.

By an \emph{$n$-diagram} we mean a continuous map $d \colon \bigsqcup_{i=1}^{n+1} I_i \to \RR \times I$ sending $n+1$ disjoint copies of the unit intervals $I=[0,1]$ to the unit strip, in such a way that:
\begin{enumerate}[label=(\Alph*)]
\item\label{it:first} the vertical projection sends each \emph{strand} $d(I_i)$ bijectively onto $I$;
\item $d$ sends the endpoints $0,1 \in I_i$ to $(k,0)$ and $(l,1)$ respectively, for some $k,l \in \{1,2,\ldots,n+1\}$;
\item\label{it:last} $d$ is injective except for a finite number of interior double points.
\end{enumerate}
See Fig.~\ref{P:DiagsEx} for examples. These $n$-diagrams are considered up to \emph{$n$-diagram isotopy}, with the usual notion of isotopy for topological objects. They can be thought of as braid or, alternatively, $4$-valent graph diagrams.

\begin{figure}[h]
\centering
\begin{tikzpicture}[x=15,y=15]
\draw [line width=1pt,color=ccqqqq] (0.,0.)-- (2.,4.);
\draw [line width=1pt,color=qqqqcc] (1.,0.)-- (0.,4.);
\draw [line width=1pt,color=ccqqqq] (2.,0.)-- (3.,4.);
\draw [line width=1pt,color=qqqqcc] (3.,0.)-- (1.,4.);
\draw [line width=1pt] (4.,0.)-- (4.,4.);
\begin{scriptsize}
\draw [fill=ccqqqq] (0.,0.) circle (1.0pt);
\draw[color=ccqqqq] (0,-0.31) node {$1$};
\draw [fill=qqqqcc] (1.,0.) circle (1.0pt);
\draw[color=qqqqcc] (1,-0.31) node {$2$};
\draw [fill=ccqqqq] (2.,0.) circle (1.0pt);
\draw[color=ccqqqq] (2,-0.31) node {$3$};
\draw [fill=qqqqcc] (3.,0.) circle (1.0pt);
\draw[color=qqqqcc] (3,-0.31) node {$4$};
\draw [fill=uuuuuu] (4.,0.) circle (1.0pt);
\draw[color=uuuuuu] (4,-0.31) node {$5$};
\draw [fill=qqqqcc] (0.,4.) circle (1.0pt);
\draw [fill=qqqqcc] (1.,4.) circle (1.0pt);
\draw [fill=ccqqqq] (2.,4.) circle (1.0pt);
\draw [fill=ccqqqq] (3.,4.) circle (1.0pt);
\draw [fill=uuuuuu] (4.,4.) circle (1.0pt);
\end{scriptsize}
\end{tikzpicture} \qquad\qquad\qquad
\begin{tikzpicture}[x=15,y=15]
\draw [line width=1.pt] (3.,0.)-- (4.,4.);
\draw [line width=1.pt] (4.,0.)-- (1.,4.);
\draw [line width=1.pt] (1.,0.)-- (3.,4.);
\draw [shift={(-1.7084782608695648,2.)},line width=1.pt]  plot[domain=-0.863846371292726:0.8638463712927259,variable=\t]({1.*2.6303798143735615*cos(\t r)+0.*2.6303798143735615*sin(\t r)},{0.*2.6303798143735615*cos(\t r)+1.*2.6303798143735615*sin(\t r)});
\draw [shift={(2.4712658227848103,2.)},line width=1.pt]  plot[domain=1.8022079906921982:4.4809773164873885,variable=\t]({1.*2.054772852586155*cos(\t r)+0.*2.054772852586155*sin(\t r)},{0.*2.054772852586155*cos(\t r)+1.*2.054772852586155*sin(\t r)});
\begin{scriptsize}
\draw [fill=uuuuuu] (0.,0.) circle (1.0pt);
\draw [fill=uuuuuu] (1.,0.) circle (1.0pt);
\draw [fill=uuuuuu] (2.,0.) circle (1.0pt);
\draw [fill=uuuuuu] (3.,0.) circle (1.0pt);
\draw [fill=uuuuuu] (4.,0.) circle (1.0pt);
\draw [fill=uuuuuu] (0.,4.) circle (1.0pt);
\draw [fill=uuuuuu] (1.,4.) circle (1.0pt);
\draw [fill=uuuuuu] (2.,4.) circle (1.0pt);
\draw [fill=uuuuuu] (3.,4.) circle (1.0pt);
\draw [fill=uuuuuu] (4.,4.) circle (1.0pt);
\draw[color=uuuuuu] (0,-0.31) node {$1$};
\draw[color=uuuuuu] (1,-0.31) node {$2$};
\draw[color=uuuuuu] (2,-0.31) node {$3$};
\draw[color=uuuuuu] (3,-0.31) node {$4$};
\draw[color=uuuuuu] (4,-0.31) node {$5$};
\end{scriptsize}
\end{tikzpicture} \qquad\qquad\qquad
\begin{tikzpicture}[x=15,y=15]
\begin{scriptsize}
\draw [line width=1.pt] (4.,0.)-- (1.,4.);
\draw [line width=1.pt] (1.,0.)-- (4.,4.);
\draw [line width=1.pt] (3.,0.)-- (0.,4.);
\draw [shift={(-1.535,3.2675)},line width=1.pt]  plot[domain=-1.131617191598453:0.20432197359684062,variable=\t]({1.*3.6100943547226017*cos(\t r)+0.*3.6100943547226017*sin(\t r)},{0.*3.6100943547226017*cos(\t r)+1.*3.6100943547226017*sin(\t r)});
\draw [shift={(3.33008,1.79248)},line width=1.pt]  plot[domain=1.7192219478353934:4.074006033090464,variable=\t]({1.*2.2320612350023015*cos(\t r)+0.*2.2320612350023015*sin(\t r)},{0.*2.2320612350023015*cos(\t r)+1.*2.2320612350023015*sin(\t r)});
\
\draw [fill=uuuuuu] (0.,0.) circle (1.0pt);
\draw [fill=uuuuuu] (1.,0.) circle (1.0pt);
\draw [fill=uuuuuu] (2.,0.) circle (1.0pt);
\draw [fill=uuuuuu] (3.,0.) circle (1.0pt);
\draw [fill=uuuuuu] (4.,0.) circle (1.0pt);
\draw [fill=uuuuuu] (0.,4.) circle (1.0pt);
\draw [fill=uuuuuu] (1.,4.) circle (1.0pt);
\draw [fill=uuuuuu] (2.,4.) circle (1.0pt);
\draw [fill=uuuuuu] (3.,4.) circle (1.0pt);
\draw [fill=uuuuuu] (4.,4.) circle (1.0pt);
\draw[color=uuuuuu] (0,-0.31) node {$1$};
\draw[color=uuuuuu] (1,-0.31) node {$2$};
\draw[color=uuuuuu] (2,-0.31) node {$3$};
\draw[color=uuuuuu] (3,-0.31) node {$4$};
\draw[color=uuuuuu] (4,-0.31) node {$5$};
\end{scriptsize}
\end{tikzpicture}
\caption{Examples of $4$-diagrams}\label{P:DiagsEx}
\end{figure}
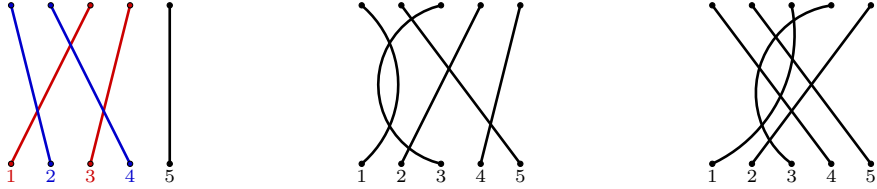

We also need \emph{$2$-} and \emph{$3$-moves} for $n$-diagrams, depicted on Fig.~\ref{P:Moves}. These moves are local: they involve only a small region of the diagram, outside of which the diagram remains unchanged. These moves are reminiscent of the Reidemeister moves $2$ and $3$, and are motivated by the Hecke--Kiselman relations.

\begin{figure}[h]
\centering
\begin{tikzpicture}[x=15,y=15]
\fill[line width=.8pt,dash pattern=on 5pt off 5pt,fill=white,draw=black] (-0.5,2.) -- (1.5,2.) -- (1.5,0.) -- (-0.5,0.) -- cycle;
\fill[line width=.8pt,dash pattern=on 5pt off 5pt,fill=white,draw=black] (5.,2.) -- (7.,2.) -- (7.,0.) -- (5.,0.) -- cycle;
\draw [shift={(-0.4442307692307692,1.)},line width=1.2pt]  plot[domain=-1.1527504411823672:1.1527504411823675,variable=\t]({1.*1.0942307692307691*cos(\t r)+0.*1.0942307692307691*sin(\t r)},{0.*1.0942307692307691*cos(\t r)+1.*1.0942307692307691*sin(\t r)});
\draw [shift={(1.4442307692307692,1.)},line width=1.2pt]  plot[domain=1.9888422124074259:4.29434309477216,variable=\t]({1.*1.0942307692307691*cos(\t r)+0.*1.0942307692307691*sin(\t r)},{0.*1.0942307692307691*cos(\t r)+1.*1.0942307692307691*sin(\t r)});
\draw [line width=1.6pt,<->] (2.3,1.)-- (4.3,1.);
\draw (2.1,2) node[anchor=north west] {$2$-move};
\draw [line width=1.2pt] (5.5,2.)-- (6.5,0.);
\draw [line width=1.2pt] (6.5,2.)-- (5.5,0.);
\end{tikzpicture} \qquad\qquad
\begin{tikzpicture}[x=15,y=15]
\fill[line width=.8pt,dash pattern=on 5pt off 5pt,fill=white,draw=black] (-0.5,2.) -- (1.5,2.) -- (1.5,0.) -- (-0.5,0.) -- cycle;
\fill[line width=.8pt,dash pattern=on 5pt off 5pt,fill=white,draw=black] (5.,2.) -- (7.,2.) -- (7.,0.) -- (5.,0.) -- cycle;
\draw [line width=1.2pt] (-.3,2.)-- (1.3,0.);
\draw [line width=1.2pt] (1.3,2.)-- (-.3,0.);
\draw [line width=1.2pt,rounded corners] (0.5,2.)-- (0,1.)-- (0.5,0.);
\draw [line width=1.6pt,<->] (2.3,1.)-- (4.3,1.);
\draw (2.1,2) node[anchor=north west] {$3$-move};
\draw [line width=1.2pt] (5.2,2.)-- (6.8,0.);
\draw [line width=1.2pt] (6.8,2.)-- (5.2,0.);
\draw [line width=1.2pt,rounded corners] (6,2.)-- (6.5,1.)-- (6,0.);
\begin{scope}[shift={(5.5,0)}]
\fill[line width=.8pt,dash pattern=on 5pt off 5pt,fill=white,draw=black] (5.,2.) -- (7.,2.) -- (7.,0.) -- (5.,0.) -- cycle;
\draw [line width=1.6pt,<->] (2.3,1.)-- (4.3,1.);
\draw (2.1,2) node[anchor=north west] {$3$-move};
\draw [line width=1.2pt] (5.2,2.)-- (6,0.);
\draw [line width=1.2pt] (6,2.)-- (6.8,0.);
\draw [line width=1.2pt] (6.8,2.)-- (5.2,0.);
\end{scope}
\end{tikzpicture}
\caption{Local $2$- and $3$-moves for $n$-diagrams}\label{P:Moves}
\end{figure}
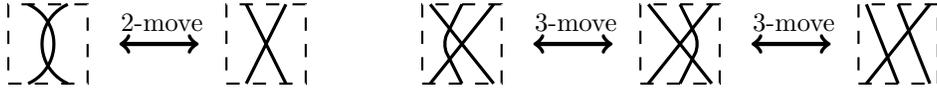

One \emph{composes} two $n$-diagrams by putting the first one on top of the second one and shrinking the result; see Fig.~\ref{P:Compo} (left). This defines a monoid structure, which survives in the quotient by $2$- and $3$-moves. This quotient monoid will be called the \emph{diagrammatic linear Hecke-Kiselman monoid}, denoted by $\DHKL$. Like braid monoids, it is generated by the \emph{elementary $n$-diagrams} $d_1,\ldots,d_n$ from Fig.~\ref{P:Compo} (right).  

\begin{figure}[h]
\centering
\begin{tikzpicture}[x=15,y=15]
\draw [line width=1.pt] (3.,0.)-- (4.,1.);
\draw [line width=1.pt] (2.,0.)-- (3.,1.);
\draw [line width=1.pt] (4.,0.)-- (2.,1.);
\draw [line width=1.pt] (4.,1.)-- (3.,2.);
\draw [line width=1.pt] (3.,1.)-- (4.,2.);
\draw [line width=1.pt] (2.,1.)-- (2.,2.);
\draw [line width=0.8pt,dash pattern=on 5pt off 5pt] (1.,2.)-- (5.,2.);
\draw [line width=0.8pt,dash pattern=on 5pt off 5pt] (1.,0.)-- (5.,0.);
\draw [line width=0.8pt,dash pattern=on 5pt off 5pt] (1.,1.)-- (5.,1.);
\draw (5.1,1.04) node[anchor=north west] {$d'$};
\draw (5.04,2.14) node[anchor=north west] {$d$};
\draw (-1.2,1.68) node[anchor=north west] {$dd'=$};
\begin{scriptsize}
\draw [fill=uuuuuu] (2.,0.) circle (1.0pt);
\draw [fill=uuuuuu] (3.,0.) circle (1.0pt);
\draw [fill=uuuuuu] (4.,0.) circle (1.0pt);
\draw [fill=uuuuuu] (2.,1.) circle (1.0pt);
\draw [fill=uuuuuu] (3.,1.) circle (1.0pt);
\draw [fill=uuuuuu] (4.,1.) circle (1.0pt);
\draw [fill=uuuuuu] (2.,2.) circle (1.0pt);
\draw [fill=uuuuuu] (3.,2.) circle (1.0pt);
\draw [fill=uuuuuu] (4.,2.) circle (1.0pt);
\end{scriptsize}
\end{tikzpicture} \qquad\qquad\qquad
\begin{tikzpicture}[x=15,y=15]
\draw [line width=1.pt] (1.,0.)-- (1.,1.);
\draw [line width=1.pt] (2.,0.)-- (3.,1.);
\draw [line width=1.pt] (3.,0.)-- (2.,1.);
\draw [line width=1.pt] (4.,0.)-- (4.,1.);
\draw (-1.2,1) node[anchor=north west] {$d_2=$};
\begin{scriptsize}
\draw [fill=uuuuuu] (1.,0.) circle (1.0pt);
\draw [fill=uuuuuu] (2.,0.) circle (1.0pt);
\draw [fill=uuuuuu] (3.,0.) circle (1.0pt);
\draw [fill=uuuuuu] (4.,0.) circle (1.0pt);
\draw [fill=uuuuuu] (1.,1.) circle (1.0pt);
\draw [fill=uuuuuu] (2.,1.) circle (1.0pt);
\draw [fill=uuuuuu] (3.,1.) circle (1.0pt);
\draw [fill=uuuuuu] (4.,1.) circle (1.0pt);
\draw[color=uuuuuu] (1,-0.31) node {1};
\draw[color=uuuuuu] (2,-0.31) node {2};
\draw[color=uuuuuu] (3,-0.31) node {3};
\draw[color=uuuuuu] (4,-0.31) node {4};
\end{scriptsize}
\end{tikzpicture}
\caption{Left: Composing $2$-diagrams $d$ and $d'$. Right: The elementary $3$-diagram $d_2$}\label{P:Compo}
\end{figure}
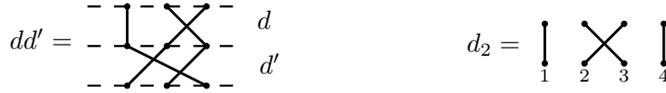

\begin{pro}\label{Pr:HKvsDHK}
The monoid map $\varepsilon_n \colon \HKL \to \DHKL$ sending each Hecke-Kiselman generator $x_i$ to the elementary $n$-diagram $d_i$ is bijective.
\end{pro}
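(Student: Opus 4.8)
The plan is to prove that $\varepsilon_n$ is in fact a monoid isomorphism by constructing an explicit two-sided inverse $\delta_n \colon \DHKL \to \HKL$ via slicing, so that bijectivity follows formally once both composites are seen to fix the generators. Well-definedness and surjectivity of $\varepsilon_n$ come first and are routine: the $2$-move gives $d_i^2 = d_i$, matching \eqref{E:idempot}; for $|i-j|>1$ the crossings $d_i$ and $d_j$ sit in disjoint horizontal strips and can be exchanged by an isotopy, giving \eqref{E:FarComm}; and reading the two $3$-moves of Fig.~\ref{P:Moves} as equalities of sliced words yields exactly $d_i d_{i+1} d_i = d_{i+1} d_i d_{i+1} = d_i d_{i+1}$, matching \eqref{E:braid}. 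Thus $\varepsilon_n$ is a well-defined homomorphism, and it is onto because the $d_i$ generate $\DHKL$.

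For injectivity I would build $\delta_n$ by cutting. Given an $n$-diagram, I first perturb it by isotopy so that its finitely many double points occur at pairwise distinct heights; slicing $\RR \times I$ between consecutive crossings then presents the diagram as a vertical composite of elementary pieces, hence as a word $w$ in the $d_i$, and I set $\delta_n$ of its class to be the class of the corresponding word in the $x_i$ inside $\HKL$. Everything reduces to showing that $w$ is well defined modulo the relations \eqref{E:idempot}--\eqref{E:braid}: the choice of cutting heights, ambient isotopy of the diagram, and the $2$- and $3$-moves must each translate into consequences of the Hecke--Kiselman relations. The $2$- and $3$-moves are local, so (after a far-commutation bringing the relevant letters together) they produce precisely \eqref{E:idempot} and \eqref{E:braid}, and the choice of cutting heights only reorders crossings lying in disjoint strips, i.e.\ \eqref{E:FarComm}.

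The hard part will be controlling the effect of an arbitrary ambient isotopy, i.e.\ proving that it contributes nothing beyond far-commutativity. The key point is that each strand is a strictly monotone graph over the height coordinate (condition~\ref{it:first}) and the diagram stays generic, with only transverse double points (condition~\ref{it:last}); consequently, at each crossing the two participating strands are immediate neighbours, so $w$ records an honest sequence of elementary crossings $d_{i_j}$. I would then run a standard general-position argument: a generic isotopy kept within the class of $n$-diagrams decomposes into elementary events, and the two events that could alter $w$ nontrivially—a triple point (which would produce a braid relation) and a vertical tangency (which would create or cancel a bigon)—are exactly the configurations excluded from $n$-diagrams, hence never occur along such an isotopy. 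The only surviving event is the vertical exchange of two crossings occupying disjoint strips, which is \eqref{E:FarComm}. Phrased invariantly, this identifies $n$-diagrams up to isotopy with the trace (heap-of-pieces) monoid on $d_1,\dots,d_n$ with commutations $d_i d_j = d_j d_i$ for $|i-j|>1$; crucially $d_i d_{i+1}$ and $d_{i+1} d_i$ remain distinct, as they must since no relation of $\HKL$ equates $x_i x_{i+1}$ with $x_{i+1} x_i$. With this in hand, $w$ descends to a well-defined $\delta_n \colon \DHKL \to \HKL$. Since $\delta_n \varepsilon_n$ fixes every $x_i$ and $\varepsilon_n \delta_n$ fixes every $d_i$, and these sets generate the respective monoids, $\varepsilon_n$ and $\delta_n$ are mutually inverse, so $\varepsilon_n$ is bijective.
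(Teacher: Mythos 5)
Your proposal is correct and, in substance, follows the same route as the paper: both arguments reduce the problem to the two observations that an ambient isotopy of $n$-diagrams can only interchange the heights of crossings involving disjoint pairs of adjacent positions (hence corresponds to far-commutativity \eqref{E:FarComm}), and that a $2$- or $3$-move, once localised by an isotopy so that no other crossings lie between the relevant heights, becomes an instance of \eqref{E:idempot} or \eqref{E:braid}. The only difference is packaging --- you construct an explicit inverse by slicing and check its well-definedness via a general-position analysis of the isotopy, whereas the paper argues injectivity of $\varepsilon_n$ directly and merely asserts the isotopy step --- so your write-up usefully expands a point the paper leaves implicit.
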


\begin{proof}
The map $\varepsilon_n$ is well defined since the defining relations \eqref{E:idempot}, \eqref{E:FarComm}, and \eqref{E:braid} of $\HKL$ are realised in $\DHKL$ by $2$-moves, isotopies, and $3$-moves respectively. Moreover, $\varepsilon_n$ is surjective since the $d_i$s generate the monoid $\DHKL$. 

Next, we need to check the injectivity of $\varepsilon_n$. The relation $\varepsilon_n(w)=\varepsilon_n(w')$ means that the $n$-diagrams $\varepsilon_n(w)$ and $\varepsilon_n(w')$ are related by a sequence of isotopies and $2$- and $3$-moves. An isotopy can be realised in $\HKL$ by the far-commutativity relations \eqref{E:FarComm}. Now, given a $2$- or $3$-move, one can apply an isotopy on both sides of the move so that no double points have heights lying between the heights of the double points involved in the move. And such ``close'' $2$- and $3$-moves are realised in $\HKL$ by the relations \eqref{E:idempot} and \eqref{E:braid} respectively.
\end{proof}

Let us now move to the $\HKC$. By an \emph{extended $\tilde{n}$-diagram} we will mean a continuous map $d \colon \bigsqcup_{i=1}^{n} I_i \to \SSS^1 \times I$, where the circle $\SSS^1$ is seen as the interval $[1,n+1]$ with glued endpoints $1$ and $n+1$, satisfying the conditions \ref{it:first}-\ref{it:last} above, and considered up to isotopy. They can be thought of as braid or $4$-valent graph diagrams on a cylinder. Some examples are given in Fig.~\ref{P:CylDiagsEx}. Here cylinders are cut along the line $x=1$ and represented by squares. Braids on a cylinder have been extensively studied in the literature, and are known under various names (annular, affine etc.); see for example \cite{cyl1,cyl2,cyl3} and references thereto.
\begin{figure}[h]
\centering
\begin{tikzpicture}[x=15,y=15]
\draw [line width=1.pt,color=uuuuuu] (0.,0.)-- (2.,4.);
\draw [line width=1.pt] (3.,0.)-- (1.,4.);
\draw [line width=1.pt,color=uuuuuu] (1.,0.)-- (0.,2.);
\draw [line width=1.pt] (4.,2.)-- (3.,4.);
\draw [line width=1.pt] (2.,0.)-- (4.,4.);
\draw [line width=0.8pt,dash pattern=on 5pt off 5pt] (0.,4.)-- (4.,4.);
\draw [line width=0.8pt,dash pattern=on 5pt off 5pt] (0.,0.)-- (4.,0.);
\draw [line width=1.pt,dotted] (0.,4.)-- (0.,0.);
\draw [line width=1.pt,dotted] (4.,4.)-- (4.,0.);
\begin{scriptsize}
\draw [fill=uuuuuu] (0.,0.) circle (1.0pt);
\draw [fill=uuuuuu] (1.,0.) circle (1.0pt);
\draw [fill=uuuuuu] (2.,0.) circle (1.0pt);
\draw [fill=uuuuuu] (3.,0.) circle (1.0pt);
\draw [fill=uuuuuu] (4.,0.) circle (1.0pt);
\draw [fill=uuuuuu] (0.,4.) circle (1.0pt);
\draw [fill=uuuuuu] (1.,4.) circle (1.0pt);
\draw [fill=uuuuuu] (2.,4.) circle (1.0pt);
\draw [fill=uuuuuu] (3.,4.) circle (1.0pt);
\draw [fill=uuuuuu] (4.,4.) circle (1.0pt);
\draw[color=uuuuuu] (0,-0.31) node {1};
\draw[color=uuuuuu] (1,-0.31) node {2};
\draw[color=uuuuuu] (2,-0.31) node {3};
\draw[color=uuuuuu] (3,-0.31) node {4};
\draw[color=uuuuuu] (4,-0.31) node {5};
\end{scriptsize}
\end{tikzpicture}\qquad\qquad\qquad\qquad
\begin{tikzpicture}[x=15,y=15]
\draw [line width=1.pt,color=uuuuuu] (0.,0.)-- (1.,4.);
\draw [line width=1.pt,color=uuuuuu] (1.,0.)-- (2.,4.);
\draw [line width=1.pt,color=uuuuuu] (2.,0.)-- (3.,4.);
\draw [line width=1.pt,color=uuuuuu] (3.,0.)-- (4.,4.);
\draw [line width=0.8pt,dash pattern=on 5pt off 5pt] (0.,4.)-- (4.,4.);
\draw [line width=0.8pt,dash pattern=on 5pt off 5pt] (0.,0.)-- (4.,0.);
\draw [line width=1.pt,dotted] (0.,4.)-- (0.,0.);
\draw [line width=1.pt,dotted] (4.,4.)-- (4.,0.);
\begin{scriptsize}
\draw [fill=uuuuuu] (0.,0.) circle (1.0pt);
\draw [fill=uuuuuu] (1.,0.) circle (1.0pt);
\draw [fill=uuuuuu] (2.,0.) circle (1.0pt);
\draw [fill=uuuuuu] (3.,0.) circle (1.0pt);
\draw [fill=uuuuuu] (4.,0.) circle (1.0pt);
\draw [fill=uuuuuu] (0.,4.) circle (1.0pt);
\draw [fill=uuuuuu] (1.,4.) circle (1.0pt);
\draw [fill=uuuuuu] (2.,4.) circle (1.0pt);
\draw [fill=uuuuuu] (3.,4.) circle (1.0pt);
\draw [fill=uuuuuu] (4.,4.) circle (1.0pt);
\draw[color=uuuuuu] (0,-0.31) node {1};
\draw[color=uuuuuu] (1,-0.31) node {2};
\draw[color=uuuuuu] (2,-0.31) node {3};
\draw[color=uuuuuu] (3,-0.31) node {4};
\draw[color=uuuuuu] (4,-0.31) node {5};
\end{scriptsize}
\end{tikzpicture}
\caption{Left: A $\tilde{4}$-diagram. Right: The extended $\tilde{4}$-diagram $t$}\label{P:CylDiagsEx}
\end{figure}
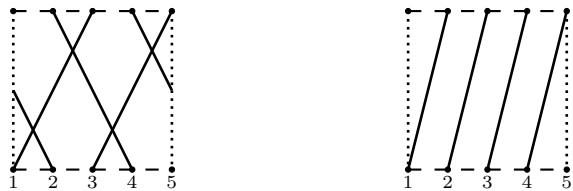

Extended $\tilde{n}$-diagrams form a monoid under the stack-and-shrink composition. Consider the submonoid generated by the \emph{elementary $\tilde{n}$-diagrams} $\tilde{d}_1,\ldots,\tilde{d}_n$, defined by the same pictures as the elementary $n$-diagrams $d_i$. Its elements will be called \emph{$\tilde{n}$-diagrams}. In other words, we forbid the $\frac{2\pi}{n}$ twist $t$ from Fig.~\ref{P:CylDiagsEx} (which, together with the $\tilde{d}_i$s, generates the whole monoid of extended $\tilde{n}$-diagrams). For example, the left $\tilde{4}$-diagram from Fig.~\ref{P:CylDiagsEx} decomposes as $\tilde{d}_2\tilde{d}_4\tilde{d}_1\tilde{d}_3$; actually, it has $4$ different decompositions due to isotopy: one can exchange $\tilde{d}_1$ and $\tilde{d}_3$, and $\tilde{d}_2$ and $\tilde{d}_4$. The quotient of this monoid by the same local $2$- and $3$-relations as in the linear case will be called the \emph{diagrammatic circular Hecke-Kiselman monoid} $\DHKC$.

The proof of Proposition \ref{Pr:HKvsDHK} extends verbatim to this new setting, and yields
\begin{pro}\label{Pr:HKvsDHK_C}
The monoid map $\tilde{\varepsilon}_n \colon \HKC \to \DHKC$ sending each Hecke-Kiselman generator $x_i$ to the elementary $\tilde{n}$-diagram $\tilde{d}_i$ is bijective.
\end{pro}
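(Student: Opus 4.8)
The plan is to mirror the three-part argument of Proposition~\ref{Pr:HKvsDHK} — well-definedness, surjectivity, injectivity — and to isolate the only genuinely new input, which lives at the seam $x=1$ of the cylinder.

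First I would check that $\tilde{\varepsilon}_n$ is well defined, i.e. that each defining relation of $\HKC$ survives in $\DHKC$. For the indices $1 \le i < n$ this is literally the linear computation: \eqref{E:idempot} is a $2$-move, \eqref{E:FarComm} an isotopy, and \eqref{E:braid} a $3$-move. The new relations are those produced by the cyclic convention $x_{n+1}=x_1$: the idempotency $x_n^2=x_n$, the far-commutativity relations pairing $x_n$ with a non-neighbour $x_j$, and the braid relation $\tilde{d}_n\tilde{d}_1\tilde{d}_n=\tilde{d}_1\tilde{d}_n\tilde{d}_1=\tilde{d}_n\tilde{d}_1$ on the three strands $n,1,2$. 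Each of these is realised by the same local move as before, now performed in a small region straddling the seam: the relevant bigon (for $x_n^2$) and triangle (for the $x_n$-braid relation) are contractible, since the move is confined to a disk, so the local $2$- and $3$-moves apply verbatim. Surjectivity is immediate, because by definition $\DHKC$ is generated by $\tilde{d}_1,\dots,\tilde{d}_n=\tilde{\varepsilon}_n(x_1),\dots,\tilde{\varepsilon}_n(x_n)$.

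For injectivity I would argue as in the linear case: if $\tilde{\varepsilon}_n(w)$ and $\tilde{\varepsilon}_n(w')$ become isotopic after a sequence of $2$- and $3$-moves, I translate each elementary operation into a relation of $\HKC$. An isotopy decomposes into elementary height-exchanges of crossings sitting at cyclically non-adjacent horizontal positions, each realised by a far-commutativity relation \eqref{E:FarComm}; a $2$- or $3$-move, after a preliminary isotopy making it ``close'' (no double point at intermediate height), is realised by \eqref{E:idempot} or \eqref{E:braid}. The only thing to watch is that such operations may now occur across the seam, but this changes nothing: an exchange of two crossings on opposite sides of the cut is still an instance of far-commutativity between $x_n$ (or $x_1$) and a non-neighbour, and a ``close'' move straddling the seam is still an instance of idempotency or of the braid relation.

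The step I expect to be the main obstacle is controlling cylinder isotopies globally. On a cylinder, isotopies are more flexible than on a strip: a strand can in principle be dragged all the way around the circle. I must verify that, because we work inside the submonoid of $\tilde{n}$-diagrams — i.e. we have excluded the twist $t$ — no such global winding can intervene, so that an isotopy between two $\tilde{n}$-diagrams may be chosen to stay within $\tilde{n}$-diagrams and to factor through the local height-exchanges above. Concretely, I would record that each $\tilde{d}_i$ has zero net rotation number (at $\tilde{d}_n$ one strand crosses the seam with displacement $+1$ and the other with $-1$), so every $\tilde{n}$-diagram has rotation number $0$, whereas the twist $t$ has rotation number $n$; since the rotation number is an isotopy invariant valued in $\ZZ$, the twist can never appear along an isotopy relating two $\tilde{n}$-diagrams, confining all the relevant moves to the local, seam-respecting ones already handled. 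Once this is in place, the linear proof transfers verbatim and yields the bijection.
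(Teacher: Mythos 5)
Your proposal is correct and follows essentially the same route as the paper, which simply states that the proof of Proposition~\ref{Pr:HKvsDHK} extends verbatim to the cylinder; your three-part argument (well-definedness, surjectivity, injectivity via translating isotopies and close $2$-/$3$-moves into relations) is exactly that extension, spelled out. The extra discussion of the seam and the displacement/rotation invariant ruling out the twist $t$ is a harmless refinement of a point the paper leaves implicit (isotopies fix endpoints and preserve the class of $\tilde{n}$-diagrams by definition), not a different method.
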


\section{Weakly entangled braid diagrams}

In this section we show how to ``disentangle'' any $n$- or $\tilde{n}$-diagram using $2$- and $3$-moves. We will later see that the result of this disentanglement is unique.

A \emph{weakly entangled braid $n$-diagram}, or \emph{$n$-web} for short, is an $n$-diagram without bigons nor triangles. That is, any two strands intersect at most once, and there are no pairwise intersecting strand triples. The set of $n$-webs, considered up to isotopy as usual, is denoted by $\WHKL$. Note that a composition of webs need not be one. Webs have two useful alternative descriptions:

\begin{pro}\label{Pr:Webs}
For an $n$-diagram $d$, the following statements are equivalent:
\begin{enumerate}
\item\label{it1} $d$ is an $n$-web;
\item\label{it2} $d$ has neither minimal bigons nor minimal triangles;
\item\label{it3} $d$ can be isotoped to an $n$-diagram where each strand is either vertical or projects injectively to the $x$-axis, and when two strands cross, the $x$-coordinate is strictly increasing (when followed from bottom to top) for one of them and strictly decreasing for the second one.
\end{enumerate}
\end{pro}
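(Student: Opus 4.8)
The plan is to prove the easy implications $(1)\Rightarrow(2)$ and $(3)\Rightarrow(1)$ directly, and then the two genuine ones $(2)\Rightarrow(1)$ and $(1)\Rightarrow(3)$, which together give $(1)\Leftrightarrow(2)$ and $(1)\Leftrightarrow(3)$. Throughout I use that, by condition \ref{it:first}, every strand is the graph $x=f(y)$ of a continuous function, and is in particular monotone in $y$.

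The implication $(1)\Rightarrow(2)$ is immediate, a minimal bigon (resp.\ triangle) being in particular a bigon (resp.\ triangle). For $(3)\Rightarrow(1)$ I would exploit the monotonicity of the normal form: there each strand is either vertical or strictly monotone in $x$, and two crossing strands have opposite $x$-monotonicity. Writing two such strands as $x=f(y)$ and $x=g(y)$ with $f$ strictly increasing and $g$ strictly decreasing, the difference $f-g$ is strictly increasing and so has at most one zero; hence no two strands cross twice, and there are no bigons. If strands $i,j,k$ were pairwise crossing, then $i,j$ opposite and $j,k$ opposite would force $i,k$ to share the same $x$-monotonicity, contradicting that $i,k$ cross; so there are no triangles, and $d$ is an $n$-web.

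For $(2)\Rightarrow(1)$ I would run the classical innermost argument, treating bigons and triangles simultaneously. Arguing contrapositively, suppose $d$ has a bigon or a triangle, and among all bigon disks and triangle disks choose one, $R$, of minimal area. If the interior of $R$ is empty, $R$ is a minimal bigon or a minimal triangle and we are done. Otherwise some strand $\ell$ meets the interior of $R$; as $\ell$ is monotone in $y$ and cannot begin or end inside $R$, it enters and leaves $R$ across $\partial R$. If $\ell$ crosses the same boundary strand twice it cuts off a strictly smaller bigon inside $R$; if it crosses two distinct boundary strands it cuts off a strictly smaller triangle, using the corner of $R$ where those two arcs meet. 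Either case contradicts the minimality of $R$, so $R$ must in fact be empty.

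The implication $(1)\Rightarrow(3)$ is the crux. Reading $d$ from bottom to top as a wiring diagram, the absence of bigons means that two strands cross exactly when their endpoints are inverted, and the absence of triangles means that the resulting permutation $\pi$ has no decreasing subsequence of length three, i.e.\ $\pi$ is $321$-avoiding. The essential observation is that in the target form the $x$-direction of a strand is forced by its endpoints: a strand joining bottom position $p$ to top position $q$, if strictly monotone in $x$, must be increasing when $q>p$ and decreasing when $q<p$, and must be vertical when $q=p$. Thus $(3)$ demands that every crossing join a rightward ($q>p$) and a leftward ($q<p$) strand. The main obstacle is to deduce exactly this from $321$-avoidance: if two rightward strands at positions $i<j$ were inverted, a counting argument on the values occupying the first $j$ positions produces a later position carrying a small value, yielding a $321$-pattern; symmetric arguments rule out two inverted leftward strands and any inversion involving a fixed strand. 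Granting this, the straight-line representative, drawing each strand as the segment between its two endpoints, lies in form $(3)$ and realises the same inversions, hence the same crossings, as $d$; since any two reduced monotone diagrams of a $321$-avoiding permutation differ only by far-commutations of crossings, which are $n$-diagram isotopies (as already used in the proof of Proposition \ref{Pr:HKvsDHK}), this representative is isotopic to $d$, completing the proof.
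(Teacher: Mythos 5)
Your implications \ref{it1}$\Rightarrow$\ref{it2} and \ref{it3}$\Rightarrow$\ref{it1} are fine and essentially identical to the paper's, and your innermost-disk argument for \ref{it2}$\Rightarrow$\ref{it1} is a correct, standard alternative route. The genuine gap is in \ref{it1}$\Rightarrow$\ref{it3}, which you rightly call the crux. Everything there up to the last sentence is sound: no bigons means crossings are exactly inversions, no triangles means the underlying permutation $p(d)$ is $321$-avoiding, your counting argument showing that every inversion of a $321$-avoiding permutation pairs a rightward strand with a leftward one is exactly the one the paper uses later (in the proof of Proposition~\ref{Pr:SeqPerm}), and the straight-line diagram of $p(d)$ is indeed in form \ref{it3}. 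But the final step --- ``this representative is isotopic to $d$'' --- is not established. Having the same set of crossings (one per inversion) does not make two diagrams isotopic: isotopy only permutes crossings that are far apart, so you must show that the \emph{order} of the crossings in $d$ agrees with that of the straight-line diagram up to far-commutation. The fact you invoke, that any two reduced diagrams of a $321$-avoiding permutation differ only by commutations, is true (it is the full commutativity of $321$-avoiding permutations combined with Tits' theorem on reduced words), but it is a nontrivial theorem you neither prove nor can import from this self-contained paper; worse, you state it for reduced \emph{monotone} diagrams, whereas $d$ is precisely not yet known to be monotone --- that is the statement \ref{it3} being proved. Within the paper, the only proof that webs with the same permutation are isotopic is the linearisation induction in Proposition~\ref{Pr:SeqPerm}, which itself relies on item \ref{it3} of the present proposition, so appealing to it here would be circular.

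The paper sidesteps all of this by proving \ref{it2}$\Rightarrow$\ref{it3} directly and locally: write $d$ as a composition of elementary diagrams; if every strand goes consistently left or right at all of its crossings, a small perturbation puts $d$ in form \ref{it3}; otherwise, among pairs of crossings joined by a common direction-changing strand, one of minimal height difference yields a \emph{minimal} bigon or triangle, contradicting \ref{it2}. This single local argument does the work that your decomposition splits between \ref{it2}$\Rightarrow$\ref{it1} and \ref{it1}$\Rightarrow$\ref{it3}, and it never needs the global classification of webs by permutations. To repair your version, either supply an actual isotopy from $d$ to the straight-line diagram (for instance the inductive ``pull the leftmost strand to the top'' linearisation used later in the paper, reproved here without assuming \ref{it3}), or replace your \ref{it1}$\Rightarrow$\ref{it3} by the paper's local minimality argument.
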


Here a bigon/triangle is called \emph{minimal} if it is not intersected by other strands. For instance, the first $4$-diagram from Fig.~\ref{P:DiagsEx} is a $4$-web; the second one has a minimal bigon and a non-minimal triangle; and the third one has a non-minimal bigon and both minimal and non-minimal triangles.

The condition on strands in \ref{it3} is equivalent to saying that, when followed from bottom to top, they always go straight up / to the right / to the left. Such strands will be called \emph{trivial}, \emph{right}, and \emph{left} respectively. Moreover, each crossing should be an intersection of a right strand and a left strand. For instance, in the first $4$-diagram from Fig.~\ref{P:DiagsEx}, strands 1 and 3 are right, 2 and 4 are left, and 5 is trivial. Here and below strands are numbered by the $x$-coordinate of their lower endpoint. The trivial/right/left property does not depend on the concrete diagram representing an $n$-web, since it is determined by the endpoints of the strand only. It is thus legitimate to talk about a trivial/right/left strand of an $n$-web. 

\begin{proof}
Implication \ref{it1} $\,\Rightarrow\,$ \ref{it2} is trivial.

To show \ref{it3} $\,\Rightarrow\,$ \ref{it1}, present an $n$-diagram as explained in \ref{it3}. In a bigon, the strand going to the left at the lower intersection goes to the right at the upper one. The same happens to the strand connecting the lower and the upper intersections in a triangle. Thus an $n$-diagram satisfying \ref{it3} has neither bigons nor triangles. 

To show \ref{it2} $\,\Rightarrow\,$ \ref{it3}, take an $n$-diagram without minimal bigons nor minimal triangles, and present it as a composition of elementary $n$-diagrams. Let us follow each strand from bottom to top. If each strand goes in the same direction (left or right) at each intersection it crosses, then, by slightly moving some vertical segments if necessary, one gets an $n$-diagram satisfying \ref{it3}. Now, assume that one strand goes to different directions at some of its intersections. Then among pairs of intersections connected by a common strand changing direction in between, choose one with the minimal height difference. Let us denote this common strand by $s$, the two intersections realising the desired property by $A$ and $B$, and the two other strands going through $A$ and $B$ by $s_A$ and $s_B$ respectively; see Fig.~\ref{P:WebProof}. 
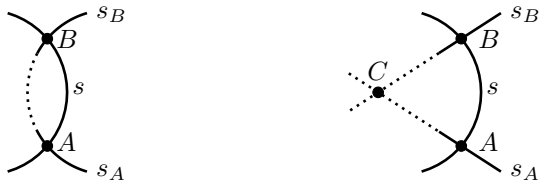
\begin{figure}[h]
\centering
\begin{tikzpicture}[x=30,y=30]
\draw [shift={(-0.2916666666666667,1.)},line width=1.pt]  plot[domain=-1.2870022175865685:1.2870022175865687,variable=\t]({1.*1.0416666666666667*cos(\t r)+0.*1.0416666666666667*sin(\t r)},{0.*1.0416666666666667*cos(\t r)+1.*1.0416666666666667*sin(\t r)});
\draw [shift={(1.2916666666666663,1.)},line width=1.pt,dotted]  plot[domain=2.649899594646554:3.6144337800879773,variable=\t]({1.*1.0416666666666663*cos(\t r)+0.*1.0416666666666663*sin(\t r)},{0.*1.0416666666666663*cos(\t r)+1.*1.0416666666666663*sin(\t r)});
\draw [shift={(1.2916666666666639,1.)},line width=1.pt]  plot[domain=1.8545904360032228:2.649899594646555,variable=\t]({1.*1.041666666666663*cos(\t r)+0.*1.041666666666663*sin(\t r)},{0.*1.041666666666663*cos(\t r)+1.*1.041666666666663*sin(\t r)});
\draw [shift={(1.2916666666666656,1.)},line width=1.pt]  plot[domain=3.6144337800879764:4.4285948711763625,variable=\t]({1.*1.041666666666665*cos(\t r)+0.*1.041666666666665*sin(\t r)},{0.*1.041666666666665*cos(\t r)+1.*1.041666666666665*sin(\t r)});
\draw (0.7,1.24) node[anchor=north west] {$s$};
\draw (0.5,0.6) node[anchor=north west] {$A$};
\draw (0.5,1.9) node[anchor=north west] {$B$};
\draw (1,2.2) node[anchor=north west] {$s_B$};
\draw (1,0.2) node[anchor=north west] {$s_A$};
\draw [fill=uuuuuu] (0.5,1.6770032003863304) circle (2pt);
\draw [fill=uuuuuu] (0.5,0.3229967996136698) circle (2pt);
\end{tikzpicture}\qquad\qquad\qquad\qquad
\begin{tikzpicture}[x=30,y=30]
\draw [shift={(-0.2916666666666667,1.)},line width=1.pt]  plot[domain=-1.2870022175865685:1.2870022175865687,variable=\t]({1.*1.0416666666666667*cos(\t r)+0.*1.0416666666666667*sin(\t r)},{0.*1.0416666666666667*cos(\t r)+1.*1.0416666666666667*sin(\t r)});
\draw [line width=1.pt,dotted] (0.21892207381119294,1.4954286591842665)-- (-0.9182918309167227,0.7607957557777041);
\draw [line width=1.pt,dotted] (-0.9182918309167223,1.2392042442222966)-- (0.2207533216825331,0.5033883664122494);
\draw [line width=1.pt] (1.,2.)-- (0.21892207381119294,1.4954286591842665);
\draw [line width=1.pt] (1.,0.)-- (0.2207533216825331,0.5033883664122494);
\draw (0.7,1.24) node[anchor=north west] {$s$};
\draw (0.6,0.6) node[anchor=north west] {$A$};
\draw (0.6,1.9) node[anchor=north west] {$B$};
\draw (1,2.2) node[anchor=north west] {$s_B$};
\draw (1,0.2) node[anchor=north west] {$s_A$};
\draw (-0.8,1.5) node[anchor=north west] {$C$};
\draw [fill=uuuuuu] (0.5,1.6770032003863304) circle (2pt);
\draw [fill=uuuuuu] (0.5,0.3229967996136698) circle (2pt);
\draw [fill=uuuuuu] (-0.5480029542027676,1.) circle (2pt);
\end{tikzpicture}
\caption{The closest intersections connected by a direction-changing strand}\label{P:WebProof}
\end{figure}
By the minimality condition in the choice of $A$ and $B$, $s$ does not run through any intersections between $A$ and $B$. If $s_A=s_B$, then, since $s_A$ cannot cross $s$ between $A$ and $B$, $s_A$ and $s$ form a bigon between $A$ and $B$. This bigon is minimal: indeed, if a strand intersects this bigon, then it crosses $s_A$ at some point $D$ between $A$ and $B$, and the pair $(D,A)$ or $(D,B)$ breaks the minimality condition in the choice of the pair $(A,B)$. In the alternative case $s_A \neq s_B$, the strands $s_A$ and $s_B$ have to cross at some point $C$ between $A$ and $B$. This yields a triangle $ABC$, which is minimal by an argument exploring once again the minimality condition on $A$ and $B$. 
\end{proof}

We will now show that $2$- and $3$-moves can reduce any $n$-diagram to an $n$-web, whose uniqueness  will be proved later on. In other words, $n$-webs yield normal forms of $n$-diagrams, easy both to construct and to compare.

Concretely, the inclusion of the set of $n$-webs into the set of $n$-diagrams induces a map $\iota \colon \WHKL \to \DHKL$.

\begin{pro}\label{Pr:Disentangle}
The map $\iota \colon \WHKL \to \DHKL$ is surjective.
\end{pro}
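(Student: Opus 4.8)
The plan is to induct on the number of double points (crossings) of an $n$-diagram, showing that any diagram which is not already a web admits a $2$- or a $3$-move that strictly decreases this number. Since every class in $\DHKL$ is represented by some $n$-diagram, and $\iota$ sends a web to its class, this yields surjectivity at once: starting from any representative, I repeatedly apply reducing moves until a web is reached, and the resulting web lies in the original class. Concretely, for a class in $\DHKL$ I fix an arbitrary representative $n$-diagram $d$ and let $c(d)$ be its number of crossings; note that $c$ is invariant under $n$-diagram isotopy. If $d$ is a web we are done, so assume it is not. By Proposition~\ref{Pr:Webs} (equivalence of \ref{it1} and \ref{it2}), $d$ then has a minimal bigon or a minimal triangle.

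Next I would match the two cases with the defining moves. The idempotency picture underlying the $2$-move replaces a bigon, i.e.\ two crossings of a single pair of strands, by one crossing, so it lowers $c$ by one; the Hecke--Kiselman $3$-move $x_i x_{i+1} x_i = x_i x_{i+1}$ of \eqref{E:braid} replaces a triangle, i.e.\ three pairwise crossings, by two crossings, again lowering $c$ by one. Hence, provided the relevant move is genuinely applicable, $c$ drops by exactly $1$. Here one must be careful to select the crossing-reducing orientation of the $3$-move ($x_ix_{i+1}x_i = x_ix_{i+1}$), rather than the crossing-preserving braid version $x_ix_{i+1}x_i = x_{i+1}x_ix_{i+1}$.

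The technical heart, and the main obstacle, is to make these local moves applicable: one must isotope $d$ so that the offending minimal bigon (resp.\ minimal triangle) sits inside a disk meeting no other strand, with its two (resp.\ three) crossings at consecutive heights, thereby reproducing exactly the left-hand picture of the corresponding move in Fig.~\ref{P:Moves}. Minimality is precisely what makes this possible. Since no strand enters the lens of the bigon (resp.\ the triangular region), no strand can cross the two boundary arcs (resp.\ the three boundary arcs) between the extreme crossing heights; otherwise such a strand would be forced into the forbidden interior, contradicting minimality. Consequently, using the far-commutativity-type isotopies already exploited in the proof of Proposition~\ref{Pr:HKvsDHK}, I can slide every unrelated double point out of the height band spanned by the crossings involved, and then shrink a neighbourhood of the lens (resp.\ triangle) to the standard local picture, where the $2$- (resp.\ $3$-) move applies.

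Finally, since $c$ is a nonnegative integer that strictly decreases at each reduction step and is unchanged by the intermediate isotopies, the process terminates after finitely many moves. The terminal diagram admits no crossing-reducing move, hence has neither a minimal bigon nor a minimal triangle, and is therefore a web by Proposition~\ref{Pr:Webs}. As it is obtained from $d$ by isotopies and $2$- and $3$-moves, it represents the same class of $\DHKL$ as $d$, establishing that $\iota$ is surjective.
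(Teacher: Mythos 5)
Your proof is correct and follows essentially the same route as the paper: locate a minimal bigon or triangle via Proposition~\ref{Pr:Webs}, remove it with the crossing-reducing $2$- or $3$-move, and induct on the crossing number. The extra care you take in isotoping the minimal configuration into the standard local picture is a detail the paper leaves implicit (it is handled in the proof of Proposition~\ref{Pr:HKvsDHK}), but it does not change the argument.
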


\begin{proof}
Take an $n$-diagram $d$ which is not an $n$-web. By Proposition~\ref{Pr:Webs}, $d$ contains a minimal bigon/triangle. A $2$-/$3$-move can be used to remove it, reducing the crossing number of $d$ by $1$. Since the crossing number can be reduced only a finite number of times, one can iterate this process until getting an $n$-web, which is equivalent to $d$ modulo $2$- and $3$-moves. Thus $d$ lies in the image of~$\iota$.
\end{proof}

Let us now turn to $\tilde{n}$-diagrams. They can have two types of bigons/triangles: \emph{contractible} and \emph{non-contractible}, according to whether or not the  bigon/triangle delimits a contractible part of the cylinder; see Fig.~\ref{P:Contractible}.

\begin{figure}[h]
\centering
\begin{tikzpicture}[x=20,y=20]
\draw[fill=mygrey,line width=2pt]
(3,1) 
-- plot[domain=-.7:.7,variable=\t]({cos(\t r)+2.87},{sin(\t r)+1})
-- cycle;
\draw[fill=mygrey,line width=2pt] plot[domain=-.6:.6,variable=\t]({cos(\t r)-0.29},{sin(\t r)+1})
--plot[domain=2.54:3.76,variable=\t]({cos(\t r)+1.29},{sin(\t r)+1});
\draw [shift={(-0.29,1.)},line width=1pt]  plot[domain=-1.29:1.29,variable=\t]({cos(\t r)},{sin(\t r)});
\draw [shift={(1.29,1.)},line width=1pt]  plot[domain=1.85:4.43,variable=\t]({cos(\t r)},{sin(\t r)});
\draw [line width=1pt] (2.,2.)-- (4.,0.);
\draw [line width=1pt] (2.,0.)-- (4.,2.);
\draw [shift={(2.87,1.)},line width=1pt]  plot[domain=-1.44:1.44,variable=\t]({cos(\t r)},{sin(\t r)});
\end{tikzpicture}\qquad\qquad\qquad\qquad
\begin{tikzpicture}[x=20,y=20]
\draw [line width=1pt] (1.,0.)-- (2.,3.);
\draw [line width=1pt] (2.,0.)-- (4.,3.);
\draw [line width=1pt] (3.,0.)-- (1.,2.);
\draw [line width=1pt] (4.,2.)-- (3.,3.);
\draw [line width=2.pt] (2.4,0.6)-- (3.6,2.4);
\draw [line width=2.pt] (2.4,0.6)-- (1.,2.);
\draw [line width=2.pt] (4.,2.)-- (3.6,2.4);
\draw [line width=0.8pt,dotted] (1.,0.)-- (1.,3.);
\draw [line width=0.8pt,dotted] (4.,0.)-- (4.,3.);
\end{tikzpicture}\qquad\qquad
\begin{tikzpicture}[x=20,y=20]
\draw [line width=0.8pt,dotted] (1.,0.)-- (1.,3.);
\draw [line width=0.8pt,dotted] (4.,0.)-- (4.,3.);
\draw [line width=1pt] (2.,0.)-- (4.,2.);
\draw [line width=1pt] (1.,2.)-- (2.,3.);
\draw [line width=1pt] (1.,0.)-- (1.,1.);
\draw [line width=1pt] (4.,1.)-- (1.,3.);
\draw [line width=1pt] (3.,3.)-- (3.,0.);
\draw [line width=2.pt] (3.,1.6666666666666667)-- (1.6,2.6);
\draw [line width=2.pt] (1.6,2.6)-- (1.,2.);
\draw [line width=2.pt] (4.,2.)-- (3.,1.);
\draw [line width=2.pt] (3.,1.)-- (3.,1.6666666666666667);
\end{tikzpicture}
\caption{Contractible (left figures) and non-contractible (right figures) bigons and triangles}\label{P:Contractible}
\end{figure}
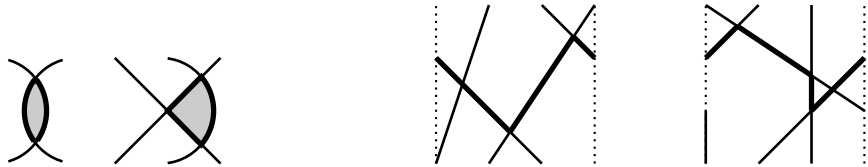

All arguments from this section can be adapted verbatim to $\tilde{n}$-diagrams if one considers only contractible bigons and triangles, and if by ``right/left'' one means ``going in the positive/negative direction of the $\SSS^1$ factor of the cylinder $\SSS^1 \times I$''. One obtains the set $\WHKC$ of \emph{$\tilde{n}$-webs}, and a map $\tilde{\iota} \colon \WHKC \to \DHKC$ satisfying:

\begin{pro}\label{Pr:Disentangle_C}
The map $\tilde{\iota} \colon \WHKC \to \DHKC$ is surjective.
\end{pro}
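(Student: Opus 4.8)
The plan is to transcribe the proof of Proposition~\ref{Pr:Disentangle} to the cylinder, systematically replacing ``bigon/triangle'' by ``contractible bigon/triangle''. Concretely: given an $\tilde{n}$-diagram $d$ that is not an $\tilde{n}$-web, it contains a contractible bigon or a contractible triangle; I would first promote this to a \emph{minimal} such configuration, then kill it by the local $2$- or $3$-move, which strictly lowers the (finite) number of double points. As that number is a non-negative integer, the reduction terminates, and the resulting $\tilde{n}$-web equals $d$ in $\DHKC$. Hence $d$ lies in the image of $\tilde{\iota}$, proving surjectivity.

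What must be supplied is the circular counterpart of Proposition~\ref{Pr:Webs}, with ``bigon/triangle'' read as ``contractible bigon/triangle'' and ``right/left'' as ``moving in the positive/negative direction of the $\SSS^1$ factor''. The implications \ref{it1}$\,\Rightarrow\,$\ref{it2} and \ref{it3}$\,\Rightarrow\,$\ref{it1} carry over word for word. The substance is \ref{it2}$\,\Rightarrow\,$\ref{it3}: following each strand from bottom to top and, among all pairs of intersections joined by a common direction-changing strand, choosing the pair $A,B$ of minimal height difference. As before this yields, on the common strand $s$ (which has no intersections strictly between $A$ and $B$), either a bigon with $s_A=s_B$ or a triangle $ABC$ with $C$ the crossing of $s_A\neq s_B$; both lie in the slab between the heights $h_A$ and $h_B$.

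The one genuinely new point --- and the step I expect to be the main obstacle --- is that this bigon/triangle is \emph{contractible}, so that a local move can in fact remove it; a bigon or triangle winding around the cylinder (see Fig.~\ref{P:Contractible}) is immune to local moves and would stall the reduction. Contractibility should follow from property~\ref{it:first}, that every strand meets each height exactly once, together with minimality. Indeed $s$ carries no crossing between $A$ and $B$, so it cannot wind around $\SSS^1$ there: a full wrap would force $s$ past each of the other $n-1$ strands present at every height, producing a crossing. For the bigon, $s$ and $s'=s_A$ meet at $A$ and $B$ and nowhere in between, so an intermediate-value argument on their relative position shows that their net relative winding over $[h_A,h_B]$ vanishes, whence the lens they bound is null-homotopic. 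For the triangle, I would pass to the universal cover $\RR\times I$ and take the lifts of $s_A,s_B$ through the lifts of $A,B$ lying on one fixed lift $\tilde{s}$ of $s$; since none of $s,s_A,s_B$ re-crosses $s$ inside the slab, these lifts stay on a single side of $\tilde{s}$ and cross at a lift $\tilde{C}$ of $C$, so that $\tilde{A}\tilde{B}\tilde{C}$ bounds a disk projecting to the contractible triangle $ABC$.

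Two bookkeeping remarks then close the adaptation. Because the $2$- and $3$-moves are supported in a disk, applying one to a contractible configuration changes no strand's winding around the cylinder; in particular the output is again a product of the $\tilde{d}_i$ with no twist $t$, hence an honest $\tilde{n}$-diagram, and we never leave $\DHKC$. And termination is guaranteed, exactly as in Proposition~\ref{Pr:Disentangle}, by the strict decrease of the finite crossing number. With contractibility secured, the remainder of the argument is the promised verbatim transcription.
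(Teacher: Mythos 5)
Your overall strategy coincides with the paper's: the paper disposes of this proposition by declaring that the arguments of Propositions~\ref{Pr:Webs} and~\ref{Pr:Disentangle} ``adapt verbatim'' once bigon/triangle is read as contractible bigon/triangle and right/left as the two directions along $\SSS^1$. You go further and isolate the one step that is genuinely not verbatim --- that the minimal bigon or triangle produced by the height-minimality argument is contractible --- and your universal-cover treatment of the triangle is correct: since neither $s_A$ nor $s_B$ may re-cross $s$ strictly between the heights of $A$ and $B$, their lifts through $\tilde{A},\tilde{B}$ stay inside one fundamental strip on one side of $\tilde{s}$, so $\tilde{A}\tilde{B}\tilde{C}$ bounds a disk projecting injectively. (The paper's subsequent Remark offers an alternative for triangles via the right/left typing of strands.)

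The one step you should tighten is the bigon. The intermediate-value argument on the relative position of $s_A$ against a fixed lift $\tilde{s}$, using only that the two strands do not cross strictly between $A$ and $B$, bounds the relative winding by $1$ in absolute value; it does not force it to vanish. Winding $\pm 1$ is exactly the non-contractible bigon of Fig.~\ref{P:Contractible}, and it genuinely occurs between two height-monotone strands with no intermediate crossings (e.g.\ the two crossings of $\tilde{d}_1\tilde{d}_2$ for $n=2$). What excludes it here is the defining property of the pair $(A,B)$: the strand $s$ goes in opposite directions at $A$ and at $B$, hence so does $s_A$, so in the cover $\tilde{s}_A$ leaves $\tilde{s}$ to one side just above $A$ and approaches its second crossing from that same side just below $B$; combined with the bound $|k|\leqslant 1$ this forces the second crossing to lie on the lift $\tilde{s}$ itself rather than on a translate, i.e.\ $k=0$. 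With that supplied, your argument is complete and the rest is indeed the verbatim transcription you describe.
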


\begin{rem}
It is interesting to note that a diagram with a non-contractible triangle necessarily contains either a contractible triangle or a contractible bigon. This can be seen using a minimality argument similar to that from the proof of Proposition~\ref{Pr:Webs}. Thus an $\tilde{n}$-web cannot contain non-contractible triangles. Another way of seeing this is observing that two of the three strands forming a triangle (contractible or not) in an $\tilde{n}$-web should be of the same type, right or left, and thus cannot intersect.
\end{rem}

\begin{rem}
The disentanglement procedures for diagrams from Propositions \ref{Pr:Disentangle} and \ref{Pr:Disentangle_C} can be translated into rewriting procedures for words in $\HKL$ and $\HKC$. Indeed, killing a bigon or a triangle boils down to applying a generalised version of relation \eqref{E:idempot} or \eqref{E:braid} respectively in the length-reducing direction. The generalisation in question allows one to insert some generators $x_j$ coherently on both sides, provided that these generators far-commute, in the sense of \eqref{E:FarComm}, with the generators involved in the relation. This rewriting approach is developed in \cite{MeOk,ArDA19}.
\end{rem}

\section{Integer sequences and permutations encoding webs}

An $n$-diagram $d$ induces a permutation $p(d) \in S_{n+1}$ of $\{1,2,\ldots,n+1\}$: just send the $x$-coordinate of the lower endpoint of each strand to the $x$-coordinate of its upper endpoint. In this section we show that an $n$-web $w$ is completely determined by the permutation $p(w)$, which establishes an explicit bijection between $\WHKL$ and $321$-avoiding permutations from $S_{n+1}$. Moreover, all one needs to reconstruct $p(w)$ are the endpoints of all right strands of $w$, encoded with two integer sequences. Similar two-integer-sequence description is developed for $\tilde{n}$-webs. This yields a very efficient way of representing, comparing, and enumerating $n$- and $\tilde{n}$-webs.

Denote by $\Perm$ the set of all \emph{$321$-avoiding permutations} from $S_{n+1}$, that is, permutations $s$ that do not completely permute any triple: one cannot simultaneously have $i<j<k$ and $s(i)>s(j)>s(k)$. Also, denote by $\II$ the set of all \emph{increasing couples of increasing integer sequences} bounded by $1$ and $n+1$, that is, $2k$ integers, for any $0 \leqslant k \leqslant n$, satisfying the inequalities
\begin{center}
\begin{tabular}{*{11}{>$c<$}}
 &  & b_1 & < & b_2 & < & \ldots & < & b_k & \leqslant & n+1 \\
 &  & \vee &  & \vee &  & \ldots &  & \vee & & \\
 1 & \leqslant & a_1 & < & a_2 & < & \ldots & < & a_k & & 
\end{tabular}
\end{center}

For a permutation $s \in S_{n+1}$, denote by $r(s)$ its \emph{right sequence couple}, consisting of the ordered sequence $(a_t)$ of all the $i$s with $s(i)>i$, and of the sequence $(b_t)$ of the same size defined by $b_t=s(a_t)$. This sequence couple satisfies all conditions from the definition of $\II$, except for the monotonicity of $(b_t)$.  

\begin{pro}\label{Pr:SeqPerm}
The maps $p$ and $r$ above induce bijections
\[\WHKL \underset{1:1}{\overset{\pi}{\longrightarrow}} \Perm \underset{1:1}{\overset{\rho}{\longrightarrow}} \II.\]
\end{pro}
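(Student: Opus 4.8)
The plan is to treat the two bijections independently, reducing both to one combinatorial fact about $321$-avoiding permutations and to the geometric normal form of Proposition~\ref{Pr:Webs}. The fact I would isolate first is: a permutation $s\in S_{n+1}$ is $321$-avoiding if and only if $s$ is strictly increasing both on the excedance set $E=\{i:s(i)>i\}$ and on its complement $\overline{E}=\{i:s(i)\le i\}$. The forward direction on $E$ follows by pigeonhole: if $i<j$ are excedances with $s(i)>s(j)$, then both $s(i)$ and $s(j)$ exceed $j$, so among the $j$ positions $\{1,\dots,j\}$ at least two values $\le j$ must sit at positions $>j$; any such position $p$ produces $i<j<p$ with $s(i)>s(j)>s(p)$. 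The direction on $\overline{E}$ is dual: a count on $\{1,\dots,i\}$ using $s(i)\le i$ yields a position $p<i$ with $s(p)>s(i)>s(j)$. Conversely, in any triple $i<j<k$ two of the positions lie in the same block, and monotonicity there forbids the values from strictly decreasing, so no $321$ pattern occurs.

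For $\rho$: since $r(s)$ records exactly $s|_E$ (the $a_t$ are the excedances and $b_t=s(a_t)$), the lemma gives $(b_t)$ increasing, while $a_t<b_t$, $a_1\ge 1$, $b_k\le n+1$ and $k\le n$ (as $n+1\notin E$) are automatic; hence $r(\Perm)\subseteq\II$. I would build the inverse $\Phi\colon\II\to\Perm$ by setting $s(a_t)=b_t$ and matching the leftover positions $[1,n+1]\setminus\{a_t\}$ to the leftover values $[1,n+1]\setminus\{b_t\}$ in increasing order. The inequalities $a_t<b_t$ give $|\{t:a_t\le x\}|\ge|\{t:b_t\le x\}|$ for every $x$, which is precisely the domination condition ensuring that the leftover increasing matching satisfies $s(c)\le c$; thus the excedances of $\Phi(a,b)$ are exactly $\{a_t\}$ and $r\circ\Phi=\mathrm{id}$. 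The opposite composite $\Phi\circ r=\mathrm{id}$ holds because $\Phi(r(s))$ and $s$ agree on $E$ and both restrict to the unique increasing matching on $\overline{E}$ by the lemma, and $\Phi$ lands in $\Perm$ by the converse direction of the lemma. This makes $\rho$ a bijection.

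For $\pi$, the governing observation is that any two strands of an $n$-diagram are graphs over the $y$-axis, so the parity of their intersection number is determined by whether their endpoints are linked; in a web, where each pair meets at most once, two strands cross if and only if their bottom and top orders disagree. Consequently three strands with bottom positions $i<j<k$ pairwise cross (form a triangle) exactly when $s(i)>s(j)>s(k)$, a $321$ pattern. Triangle-freeness therefore forces $p(w)\in\Perm$, so $\pi$ is well defined. Surjectivity: given $s\in\Perm$, draw each excedance as a straight rising segment, each deficiency as a straight falling segment, each fixed point as a vertical segment; straight segments meet at most once, and by the equivalence above there is no triangle precisely because $s$ avoids $321$, so this is a web $W(s)$ with $p(W(s))=s$. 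Injectivity: for any web $w$ with $p(w)=s$ the crossing set equals the inversion set of $s$, hence is prescribed, and a configuration of $y$-monotone arcs with at most one crossing per pair is determined up to isotopy by its endpoints and its crossing set, so $w$ is isotopic to $W(s)$.

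The step I expect to be the genuine obstacle is the topological rigidity used for injectivity of $\pi$: that two webs with the same endpoints and the same set of crossing pairs are isotopic. Everything else is either counting or a direct application of Proposition~\ref{Pr:Webs}, but this uniqueness is the one honestly geometric ingredient. I would prove it by isotoping an arbitrary web to its straight-segment model $W(s)$, sweeping from bottom to top and straightening the strands between consecutive crossing heights, the order of crossings being forced because no pair crosses twice.
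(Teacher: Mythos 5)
Your proposal is correct and follows essentially the same route as the paper: $\pi$ is handled via the crossing--inversion correspondence plus the straight-segment drawing map, and $\rho$ via the same completion map matching leftover positions to leftover values in increasing order (your excedance-set lemma and domination inequality are just a cleaner repackaging of the paper's counting arguments). The one step you deliberately leave as a sketch --- that a web is determined up to isotopy by its endpoints and crossing set --- is exactly where the paper invests its effort, proving it by induction on the number of strands: isotope the strand starting at $(1,0)$ above all others, apply the induction hypothesis to the remaining (wide) web, then straighten; your bottom-to-top sweep would also work, but note that the linear order of crossings is \emph{not} forced (disjoint crossings commute under isotopy), so that phrase should be dropped from the argument.
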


This proposition allows us to talk about the \emph{right sequence couple of an $n$-web $w$}, which is simply $\rho \circ \pi (w)$. For instance, the right sequence couple of the first $4$-diagram from Fig.~\ref{P:DiagsEx} is $((1,3),(3,4))$.

\begin{proof}
Take an $n$-web $w$. Its strands $i < j$ intersect if and only if $p(i)>p(j)$. Since $w$ has no pairwise intersecting triples, $p(w)$ is $321$-avoiding. Hence the map $\pi$ is well defined.

We will next construct the \emph{drawing map} $\delta \colon \Perm \to \WHKL$, and show that it is the inverse of $\pi$. Take a permutation $s \in \Perm$, and construct a diagram in $\RR \times I$ by connecting with a straight segment the points $(i,0)$ and $(s(i),1)$ for all $i \in \{1,2,\ldots,n+1\}$. These segments will have only simple intersections, since $s$ is $321$-avoiding. So we get an $n$-diagram. Two segments cannot form a bigon, and triangles are ruled out as $s$ is $321$-avoiding. Thus we get an $n$-web, denoted by $\delta(s)$. By construction, $\pi \circ \delta (s)=s$. Next, using an inductive argument, we will explain how to isotope any $n$-web $w$ to $\delta \circ \pi (w)$. This can be thought of as the linearisation of $w$. We will actually prove the statement for a slightly wider class of objects, called \emph{wide $n$-webs}: their upper endpoints are allowed to be of the form $(i,1)$ for any integer $i$. The case $n=1$ is trivial. Let us then consider any $n$, and assume the case $n-1$ settled. Proposition~\ref{Pr:Webs} easily adapts to wide $n$-webs: the relevant definition for trivial/right/left strands is crossingless or going to the right/left of any intersecting strand, respectively. Thus all strands of $w$ can be assumed to be trivial, right, or left. Consider the strand $l$ in $w$ connecting the point $(1,0)$ to some $(i,1)$. It is either trivial or right. Then $l$ crosses either no strands or several left strands, called \emph{active}. The strand $l$ can be isotoped up, as in Fig.~\ref{P:Linearisation}.A; indeed, the grey zone contains only left strands, which do not intersect. The remaining strands are static during this isotopy. In the pictures in Fig.~\ref{P:Linearisation}, only $l$ and the active strands are drawn, and the old and the new positions of $l$ are depicted using a solid and a dashed line respectively. If $l$ is isotoped close enough to the upper border $\RR \times \{1\}$, then the induction hypothesis can be applied to the remaining $n$ strands, while keeping $l$ static. In the resulting $n$-web, $l$ can be isotoped to the straight position, as in Fig.~\ref{P:Linearisation}.C, since once again the grey zone cannot contain any intersections. The resulting $n$-web is precisely $\delta \circ \pi (w)$.

\begin{figure}[h]
\centering
\begin{tikzpicture}[x=20,y=20]
\fill[line width=0.4pt,color=mygrey,fill=mygrey,rounded corners] (1.,0.) -- (2.1,0.8) -- (2.42,1.85) -- (3.75,2.17) -- (4.,3.) -- (1.07,2.4) -- cycle;
\draw [line width=1.pt,rounded corners] (1.,0.) -- (2.1,0.8)-- (2.42,1.85)-- (3.75,2.17)-- (4.,3.);
\draw [line width=1.pt,dash pattern=on 3pt off 3pt,rounded corners] (4.,3.)-- (1.07,2.4)-- (1.,0.);
\draw [line width=1.pt,rounded corners] (1.,3.)-- (1.7,1.1)-- (2.,0.);
\draw [line width=1.pt,rounded corners] (2.,3.)-- (1.95,1.72)-- (2.8,0.7)-- (3.,0.);
\draw [line width=1.pt,rounded corners] (3.,3.)-- (3.9,1.3)-- (4.,0.);
\draw (4.5,1.5) node {$A$};
\end{tikzpicture}\qquad\qquad
\begin{tikzpicture}[x=20,y=20]
\draw [line width=1.pt,rounded corners] (4.,3.)-- (1.07,2.4)-- (1.,0.);
\draw [line width=1.pt] (1.,3.)-- (2.,0.);
\draw [line width=1.pt] (2.,3.)-- (3.,0.);
\draw [line width=1.pt] (3.,3.)-- (4.,0.);
\draw (4.5,1.5) node {$B$};
\end{tikzpicture}\qquad\qquad
\begin{tikzpicture}[x=20,y=20]
\fill[line width=0.4pt,color=mygrey,fill=mygrey,rounded corners] (4.,3.)-- (1.07,2.4)-- (1.,0.) -- cycle;
\draw [line width=1.pt,rounded corners] (4.,3.)-- (1.07,2.4)-- (1.,0.);
\draw [line width=1.pt,dash pattern=on 3pt off 3pt] (4.,3.)-- (1.,0.);
\draw [line width=1.pt] (1.,3.)-- (2.,0.);
\draw [line width=1.pt] (2.,3.)-- (3.,0.);
\draw [line width=1.pt] (3.,3.)-- (4.,0.);
\draw (4.5,1.5) node {$C$};
\end{tikzpicture}
\caption{Linearising an $n$-web inductively}\label{P:Linearisation}
\end{figure}
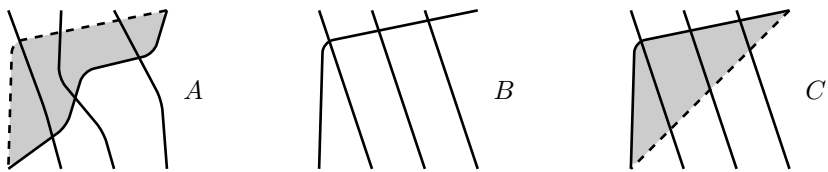

Let us now turn to sequence couples. To show that the map $\rho$ is well defined, we need to take a $321$-avoiding permutation $s$ and show that $r(s) \in \II$. That is, for any $i<j$ such as $i<s(i)$ and $j<s(j)$, we need to check that $s(i)<s(j)$. Since $j<s(j)$, there are more elements from $\{1,2,\ldots,n+1\}$ that are $>j$ than those $>s(j)$. As a result, there exists a $k>j$ with $s(k)<s(j)$. But then an $i$ with $i<j$ and $s(i)>s(j)$ would create a forbidden $321$ pattern in $s$.

It remains to describe the \emph{completion map} $\kappa \colon \II \to \Perm$, and show that it is the inverse of $\rho$. Take a sequence couple $c=((a_1,\ldots,a_k),\ (b_1,\ldots,b_k))$ from $\II$. Let $a_{k+1}<\ldots<a_{n+1}$  and $b_{k+1}<\ldots<b_{n+1}$ be the complements of the sequences $(a_t)$ and, respectively, $(b_t)$ in $\{1,2,\ldots,n+1\}$. Define a permutation $\kappa(c) \in S_{n+1}$ by sending each $a_t$ to $b_t$. One needs to show that $\kappa(c)$ is $321$-avoiding. Take any $i<j<m$. Two of the three elements $i,j,k$ have to belong either to $\{a_1,\ldots,a_k\}$ or to $\{a_{k+1},\ldots,a_{n+1}\}$. But then they are not reversed by $\kappa(c)$, forbidding the $321$ pattern. Hence a well defined map $\kappa \colon \II \to \Perm$. Moreover, with the above notations, we will prove that $\rho \circ \kappa (c)=c$. That is, the permutation $\kappa (c)$ should not increase any of $a_{k+1}, \ldots, a_{n+1}$. Suppose that, on the contrary, $a_t<s(a_t)$ for some $t>k$. As in the previous paragraph, this implies that $b_u=s(a_u)<s(a_t)=b_t$ for some $a_u>a_t$. One cannot have $u>k$, since then one would have $b_u>b_t$ by the construction of $\kappa(c)$. But $u \leqslant k$ is not possible either, since this would imply $a_u < b_u$, and then in $\delta \circ \kappa(c)$ two right strands $a_u$ and $a_t$ would intersect. Finally, for any $s \in \Perm$, one has $\kappa \circ \rho (s)=s$. To see this, it suffices to check that $\kappa$ completes the permutation $s$ correctly on the elements non-increased by~$s$. That is, for $i<j$ with $s(i) \leqslant i, s(j) \leqslant j$, one necessarily has $s(i)<s(j)$. The argument here is the same as that in the proof of $r(s) \in \II$ in the previous paragraph. 
\end{proof}

Observe that all the maps from the proposition, as well as their inverses, are explicit, and have  realisations linear in $n$ (for $\rho$ and its inverse) or in the diagram's crossing number (for $\pi$ and its inverse). 

\begin{rem}
Diminishing each $b_i$ by $1$, one gets a slightly modified condition on the sequence couple: 
\begin{center}
\begin{tabular}{*{11}{>$c<$}}
 &  & b_1 & < & b_2 & < & \ldots & < & b_k & \leqslant & n \\
 &  & \geqrot &  & \geqrot &  & \geqrot &  & \geqrot & & \\
 1 & \leqslant & a_1 & < & a_2 & < & \ldots & < & a_k & & 
\end{tabular}
\end{center}
This form suggests a bijection between $\II$ and monotonic lattice paths along the edges of a grid with $(n+1) \times (n+1)$ square cells which do not pass above the diagonal: just record the coordinates of the ``corners'' of such a path. These paths provide one of the most classical realisations of the \emph{Catalan number} $C_{n+1}$. Recording all integer points of such a path, one gets a bijection with the \emph{Catalan monoid} $CM_{n+1}$, that is, the monoid of all order-preserving and order-decreasing total transformations of the set $\{1, 2, \ldots, n+1\}$.
\end{rem}

It is time to turn to $\tilde{n}$-webs. Here again the key information is contained in the endpoints of all the right strands. To completely determine an $\tilde{n}$-web, one needs to add certain homology information---namely, how many times each right strand turns around the core of the cylinder. These two types of information can be conveniently blended into something as simple as a couple of sequences, as follows. 

For an $\tilde{n}$-web $w$, denote by $r(w)$ its \emph{right sequence couple}. The first sequence in this couple is the ordered sequence $(a_t)$ of the $x$-coordinates of the lower endpoints of all the right strands of $w$. (Recall that $\tilde{n}$-webs are drawn on the cylinder $\SSS^1 \times I$, where the circle $\SSS^1$ is seen as the interval $[1,n+1]$ with glued endpoints $1$ and $n+1$; for a strand starting at $(1,0)\equiv(n+1,0)$, we choose $a_t=1$.) Next, define a sequence $(b_t)$ of the same size by $b_t=s(a_t)+n*h_t$, where $(s(a_t),1)$ is the upper endpoint of the strand starting at $(a_t,0)$ (again, when the $x$-coordinate is $1 \equiv n+1$, we choose $1$), and $h_t$ is the number of times this strand crosses the segment $\{n+\frac{1}{2}\} \times I$. The number $h_t$ is well defined if one works only with the representatives of $w$ where our strand is right, which exist by the $\tilde{n}$-analog of Proposition~\ref{Pr:Webs}.  

Denote by $\IIC$ the set of all \emph{$n$-close increasing couples of increasing integer sequences}, that is, $2k$ integers, for any $0 \leqslant k < n$, satisfying the inequalities
\begin{center}
\begin{tabular}{*{11}{>$c<$}}
 &  & b_1 & < & b_2 & < & \ldots & < & b_k & < & b_1+n \\
 &  & \vee &  & \vee &  & \ldots &  & \vee & & \\
 1 & \leqslant & a_1 & < & a_2 & < & \ldots & < & a_k & \leqslant & n
\end{tabular}
\end{center}

\begin{pro}\label{Pr:SeqPerm_C}
The map $r$ above induces a bijection
\[\WHKC \underset{1:1}{\overset{\tilde{\varrho}}{\longrightarrow}} \IIC.\]
\end{pro}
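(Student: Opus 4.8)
The plan is to mirror the proof of Proposition~\ref{Pr:SeqPerm}, passing to the universal cover of the cylinder and replacing ordinary permutations by affine ones. First I would lift each $\tilde{n}$-web $w$ to the universal cover $\RR \times I$ of $\SSS^1 \times I$, obtaining an $n$-periodic infinite web whose strands join $(i,0)$ to $(\tilde{s}(i),1)$ for a bijection $\tilde{s} \colon \ZZ \to \ZZ$ with $\tilde{s}(i+n)=\tilde{s}(i)+n$. Because $\tilde{n}$-diagrams are built from the $\tilde{d}_i$ alone (no twist $t$), the net winding vanishes, so $\sum_{i=1}^n(\tilde{s}(i)-i)=0$; and the absence of pairwise intersecting triples makes $\tilde{s}$ an affine $321$-avoiding permutation. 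The $\tilde{n}$-analogue of Proposition~\ref{Pr:Webs} guarantees that right/left/trivial is a property of a strand, not of a representative, so the winding numbers $h_t$, and hence $r(w)=((a_t),(b_t))$ with $b_t=\tilde{s}(a_t)$, are well defined; here the right strands are exactly the lifted strands with $\tilde{s}(i)>i$ and $a_t\in\{1,\ldots,n\}$.

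Next I would check that $\tilde{\varrho}$ lands in $\IIC$. The strict increase of $(a_t)$ is the definition, and $a_t<b_t$ says that the $t$-th strand is right. As in the linear case, two right strands never cross in a web, since every crossing joins a right and a left strand; hence $a_1<\ldots<a_k$ forces $b_1<\ldots<b_k$. The genuinely new, cylindrical constraint $b_k<b_1+n$ comes from the period: the $\ZZ$-translate of the first right strand joins $a_1+n$ to $b_1+n$ and is again right, and since $a_k\leqslant n<a_1+n$ the no-crossing principle forces $b_k<b_1+n$. Finally $k<n$, because each right strand contributes a positive amount $b_t-a_t$ to $\sum_{i=1}^n(\tilde{s}(i)-i)=0$, which is impossible if all $n$ strands are right.

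To produce the inverse I would adapt the completion map $\kappa$ and the drawing map $\delta$. Given $((a_t),(b_t))\in\IIC$, I complete $\tilde{s}$ on $\{1,\ldots,n\}\setminus\{a_t\}$ by the affine analogue of $\kappa$: the remaining residues modulo $n$ are assigned to the remaining arguments below the diagonal, in increasing order, the zero-twist and residue requirements pinning down the winding. The $n$-close and $321$ conditions guarantee this yields a zero-twist affine $321$-avoiding permutation whose positive part is precisely $((a_t),(b_t))$. Drawing the straight equivariant segments from $(i,0)$ to $(\tilde{s}(i),1)$ and projecting to the cylinder gives an $\tilde{n}$-web realising the data, and an equivariant version of the inductive linearisation of Proposition~\ref{Pr:SeqPerm}---moving a strand together with all its translates at once---isotopes any web with this right sequence couple to it. The two compositions are then identities by the same bookkeeping as in the linear proof.

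The hardest part will be this reconstruction step, where the homological winding data and the combinatorial $321$-avoidance interact. Concretely, I must verify that the monotone completion of the leftover residues is compatible with the wrap-around: that no completed left or trivial strand spuriously crosses a right strand twice (a contractible bigon) or closes up a contractible triangle, and that every completed strand indeed satisfies $\tilde{s}(i)\leqslant i$, so that the positive part is exactly the prescribed right strands and no larger. Equivalently, I must make the equivariant linearisation descend to the cylinder without creating non-contractible features; the remark following Proposition~\ref{Pr:Disentangle_C}, that an $\tilde{n}$-web carries no non-contractible triangles, helps control this. These are the points with no linear counterpart, and they are precisely where the $n$-close inequality $b_k<b_1+n$ does the real work.
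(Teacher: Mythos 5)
Your forward direction is sound and in fact takes a cleaner route than the paper's. The paper verifies $r(w)\in\IIC$ by decomposing $w$ into elementary diagrams and tracking how each $b_t$ increments crossing by crossing; your argument via the periodic lift $\tilde{s}\colon\ZZ\to\ZZ$ obtains the monotonicity of $(b_t)$, the bound $b_k<b_1+n$ (from the non-crossing of the lift of strand $a_k$ with the $+n$-translate of the lift of strand $a_1$), and $k<n$ (from $\sum_{i=1}^n(\tilde{s}(i)-i)=0$) in one stroke. The one point to make explicit there is that ``no pairwise intersecting triples'' must be known to exclude non-contractible triangles as well before you can conclude that $\tilde{s}$ is affine $321$-avoiding; the remark following Proposition~\ref{Pr:Disentangle_C} supplies exactly this.

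The gap is in the inverse. What you propose --- complete $((a_t),(b_t))$ to a zero-twist affine permutation by distributing the leftover residues monotonically, draw equivariant straight segments, and project --- is precisely the ``straight-line'' construction that the paper only sketches \emph{after} its proof, explicitly ``omitting the proofs''; the paper states that it avoids this route inside the proof because of the difficulty of constructing $\tilde{\varrho}^{-1}$ this way. You correctly identify the hard points (that the completion is affine $321$-avoiding, that its positive part is exactly $((a_t),(b_t))$, that the equivariant linearisation descends to the cylinder), but you leave them as items to verify rather than resolving them. In particular, determining the winding of the left strands from $((a_t),(b_t))$ is not a routine residue count: the paper's sketch needs the Euclidean division of $h=\sum_t h_t$ by $n-k$ and a subsequent reordering to pin down the left sequence couple, and checking that the completed strands create no inversions among themselves is exactly where the $n$-closeness must be used. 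Likewise your equivariant linearisation has no distinguished base strand on the cylinder, and the induction on the number of strands from Proposition~\ref{Pr:SeqPerm} (which peels off the strand at $(1,0)$ and passes to wide webs) does not transport directly to a periodic diagram with infinitely many lifted strands. The paper's actual proof sidesteps all of this with an inductive ``spiral map'' $\tilde{\sigma}$ (peel off the maximal consecutive tail of $(b_t)$, decrement it, recurse) for surjectivity, and proves injectivity by induction on the crossing number, removing the top crossing from both webs simultaneously. To complete your argument you would either need to carry out the verifications you list or substitute an induction of that kind.
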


Note that we do not give an intermediate step for this bijection, which would play the role the $321$-avoiding permutations played for $n$-webs. This yields a slightly shorter proof. The price to pay is the difficulty of constructing the inverse for $\tilde{\varrho}$: we do it inductively, which is convenient for turning this construction into an algorithm, but makes the resulting $\tilde{n}$-web less tractable. An alternative ``straight-line'' version of $\tilde{\varrho}^{-1}$, similar to the ``straight-line'' inverse of the map $\WHKL \underset{1:1}{\overset{\pi}{\longrightarrow}} \Perm$ described in the proof of Proposition~\ref{Pr:SeqPerm}, will be given below. Less algorithmic, it is aimed at a diagrammatic-thinking reader.

\begin{proof}
Take an $\tilde{n}$-web $w$. We first need to check that $r(w) \in \IIC$. As usual, we rewrite $w$ as a composition of elementary $n$-diagrams $d_i$ where at each intersection a right strand goes to the right and a left strand goes to the left. This is possible due to the $\tilde{n}$-analog of Proposition~\ref{Pr:Webs}. We then follow $w$ from bottom to top, and observe how the sequence $(b_t)$ changes after each $d_i$. One starts with $b_t=a_t$ for all $t$. If $i<n$, and the strand going to the right at the crossing $d_i$ starts at some $(a_u,0)$, then $s(a_u)$ increases by $1$ and $h_u$ remains constant, so $b_u$ increases by $1$. If $i=n$, then $s(a_u)$ changes from $n$ to $1$, but $h_u$ increases by $1$ since the strand crosses the line $x=n+\frac{1}{2}$, so the overall increase of $b_u$ is once again $1$. The remaining right strands do not move, hence the remaining $b_t$ stay constant. Since the $b_t$ can only increase, and since each right strand goes through at least one crossing $d_i$, at the end one has $b_t>a_t$ for all $t$. Further, we start with a strictly increasing integer sequence $b_t=a_t$, with $b_k=a_k\leq n <n+1 \leq a_1+n=b_1+n$, and at each step exactly one $b_u$ increases by $1$. If at some step the monotonicity of $(b_t)$ or the condition $b_k<b_1+n$ is broken, then when this happens for the first time one has $b_u=b_{u+1}$ for some $u$, or else $b_k=b_1+n$. Thus two of the $b_t$s coincide $\operatorname{mod} n$, which means $s(a_u)=s(a_v)$ for $u \neq v$. Hence two of the upper endpoints coincide, which is impossible. Therefore $r(w)$ satisfies all the conditions defining $\IIC$.

We will now define the \emph{spiral map} $\tilde{\sigma} \colon \IIC \to \WHKC$, and show that it is the inverse of $\tilde{\varrho}$. Take a sequence couple $((a_t), (b_t)) \in \IIC$. We will work inductively on the size $k$ of $(b_t)$, and in the case of the same size on the maximal term $b_k$. One sets $\tilde{\sigma}(\emptyset,\emptyset)$ to be the trivial $\tilde{n}$-web. In the general case, let $b_u,\ldots,b_k$ be the maximal tail of $(b_t)$ consisting of consecutive numbers. In particular, $b_{u-1}<b_u -1$ if $b_{u-1}$ exists. Then decrease each of $b_u,\ldots,b_k$ by $1$, and discard all elements $a_t$ and $b_t$ with $b_t=a_t$. Denote the new sequence couple by $((a'_t), (b'_t))$. By construction, it is again from $\IIC$, and either its size or the maximal element (or both) diminished. If $\tilde{\sigma}((a'_t), (b'_t))$ was chosen to be $w'$, then put
\[w=\tilde{\sigma}((a_t), (b_t)) = \tilde{d}_{b_u-1} \cdots \tilde{d}_{b_k-1} w'.\]
By construction, at each crossing of $w$, a strand starting at a point of the form $(a_t,0)$ goes to the right, and a strand starting at a point not of this form goes to the left. Each strand starting at some $(a_t,0)$ goes to the right at least once, since $a_t < b_t$. Thus $w$ as an $\tilde{n}$-web, its right strands are precisely those starting from the points $(a_t,0)$, and $\tilde{\varrho}(w)=((a_t), (b_t))$. As a consequence, $\tilde{\varrho}\circ\tilde{\sigma} = \Id$.

To show that $\tilde{\sigma}\circ\tilde{\varrho} = \Id$, we will exhibit an isotopy between any $\tilde{n}$-webs $w$ and $v$ sharing the same right sequence couple. This is done by induction on the number of crossings of $v$. Write $v=\tilde{d}_iv'$, where the $\tilde{n}$-web $v'$ has less crossings than $v$. The crossing $\tilde{d}_i$ involves a right strand $r_v$ ending at $(i+1,1)$, and a left strand $l_v$ ending at $(i,1)$. In $w$, there is a right strand $r_w$ with the same endpoints as $r_v$, since $w$ and $v$ share the same right sequence couple. Let its highest (with respect to the $y$-coordinate) crossing $A$ be with a left strand $l_w$. Observe that $l_w$ has no crossings above $A$. Indeed, such crossings are possible only with right strands, the rightmost of which has to end at $(i,1)$, which is then the endpoint of a right strand in $w$ and of the left strand $l_v$ in $v$. But this is impossible for $\tilde{n}$-webs sharing the same right sequence couple. As a result, $l_w$ has to end at $(i,1)$, and the crossing $A$ can be isotoped to the top of $w$, which allows one to write $w=\tilde{d}_iw'$. The induction hypothesis can then be applied to $w'$ and $v'$.
\end{proof}

The adjective ``spiral'' used for the map $\tilde{\sigma}$ comes from the shape of the resulting $\tilde{n}$-webs when the crossing number is big. Following such an $\tilde{n}$-web upwards, one sees the right strands first assemble together in a co-shuffle-like way, then spiral around the core of the cylinder, and then shuffle again with the left strands.

We finish this section with a sketch of a diagrammatic construction of $\tilde{\varrho}^{-1}$, omitting the proofs, which are similar to those given for Propositions \ref{Pr:SeqPerm} and \ref{Pr:SeqPerm_C}. Consider the universal covering $p \colon \RR \times I \to \SSS^1 \times I$ of our cylinder, which identifies points $(x,y)$ and $(x+n,y)$ for all $x \in \RR, y \in I$. Given an $\tilde{n}$-web $w$, take its strand $s$ starting at some $(i,0) \in \SSS^1 \times I$ and consider the unique lift of $s$ to $\RR \times I$ starting at $(i,0) \in \RR \times I$. For a right strand starting at $(a_t,0)$, one gets a strand in $\RR \times I$ ending precisely at $(b_t,1)$. Here $((a_t), (b_t))$ is the right sequence couple of~$w$. A trivial strand lifts to itself. And a left strand starting at $(c_u,0)$ lifts to a strand ending at $(d_u,1)$. Here the sequences $(c_u), (d_u)$ are constructed as follows. The $c_u$ form the ordered sequence of the $x$-coordinates of the lower endpoints of all the left strands of $w$. Next, we put $d_u=s(c_u)-n*g_u$, where $(s(c_u),1)\in \SSS^1 \times I$ is the upper endpoint of the strand starting at $(c_u,0)$, and $g_u$ is the number of times this strand, drawn always going in the negative direction, crosses the segment $\{n+\frac{1}{2}\} \times I$. The sequence couple $((c_u),(d_u))$ is completely determined by $((a_t), (b_t))$. Indeed, define $(c'_u)$ and $(d''_u)$ as the ordered sequences of the elements of $\{1,2,\ldots,n\}$ not belonging to $(a_t)$ and $(b_t \operatorname{mod} n)$ respectively. One gets sequences of size $n-k$. Put $h=\sum_t h_t$, and divide it by $n-k$ to obtain $h=q(n-k)+r$, with $q,r \in \NN\cup\{0\}$ and $r<n-k$. Then put $d'_u= d''_u-n*(q+1)$ for the last $r$ elements of the sequence $(d''_u)$, and $d'_u= d''_u-n*q$ for the remaining ones. Reorder the $d'_u$ to get an increasing sequence. Discard all $c'_u$ and $d'_u$ with $c'_u=d'_u$. The resulting sequences are precisely $((c_u),(d_u))$. They can be thought of as the \emph{left sequence couple} of $w$, and satisfy the conditions defining $\IIC$, with all the inequalities $a_t<b_t$ replaced with $a_t>b_t$. Now, to compute $\tilde{\varrho}^{-1}((a_t), (b_t))$ for any $((a_t), (b_t)) \in \IIC$,
\begin{enumerate}
\item construct the sequences $((c_u),(d_u))$ as described above;
\item draw in $\RR \times I$ straight segments connecting $(a_t,0)$ to $(b_t,1)$ and $(c_u,0)$ to $(d_u,1)$ for all $t$ and $u$;
\item project everything to the cylinder via the map $p$;
\item add vertical segments if necessary (that is, if one gets $c'_u=d'_u$ for some $u$ when constructing $((c_u),(d_u))$).
\end{enumerate}

\begin{exa}\label{EX:4web}
Let us illustrate our algorithm on the case $n=4,\ k=2,\ a_1=2, \ a_2=3, \ b_1=6,\ b_2=9$. We have $6=2+4*1$ and $9=1+4*2$, hence $h_1=1, \ h_2=2, \ h=h_1+h_2=3$. The Euclidean division of $h=3$ by $n-k=2$ yields $3=1*2+1$, hence $q=r=1$. Further, the complements of $\{a_1=2,a_2=3\}$ and $\{b_1 \operatorname{mod} 4 = 2, b_2 \operatorname{mod} 4 = 1\}$ in $\{1,2,3,4\}$ are $\{1,4\}$ and $\{3,4\}$ respectively, so $c'_1=1,\ c'_2=4,\ d''_1=3,\ d''_2=4,\ d'_1=3-4*1=-1,\ d'_2=4-4*2=-4$. The reordering of the $d'_i$ yields $(-4,-1)$. No omissions are necessary since $c'_u>0>d'_u$ for all $u$. Thus the left sequence couple is $((1,4),(-4,-1))$. The graphical part of the algorithm is realised in Fig.~\ref{P:Linearisation_C}. 
\begin{figure}[h]
\centering
\begin{tikzpicture}[x=15,y=15]
\draw [line width=1.2pt,color=qqttzz] (1.,0.)-- (-4.,4.);
\draw [line width=1.2pt,color=wwzzff] (4.,0.)-- (-1.,4.);
\draw [line width=1.2pt,color=ffttcc] (2.,0.)-- (6.,4.);
\draw [line width=1.2pt,color=dcrutc] (3.,0.)-- (9.,4.);
\draw [line width=0.8pt,dash pattern=on 5pt off 5pt] (-7,0.)-- (13,0.);
\draw [line width=0.8pt,dash pattern=on 5pt off 5pt] (-7,4.)-- (13,4.);
\draw [line width=0.8pt,dotted] (5.,0.)-- (5.,4.);
\draw [line width=0.8pt,dotted] (1.,4.)-- (1.,0.);
\draw [line width=0.8pt,dotted] (9.,0.)-- (9.,4.);
\draw [line width=0.8pt,dotted] (-3.,4.)-- (-3.,0.);
\draw[color=ffttcc] (6,4.5) node {$6$};
\draw[color=dcrutc] (9,4.5) node {$9$};
\draw[color=wwzzff] (-1,4.5) node {$-1$};
\draw[color=qqttzz] (-4,4.5) node {$-4$};
\draw[color=qqttzz] (1,-.5) node {$1$};
\draw[color=ffttcc] (2,-.5) node {$2$};
\draw[color=dcrutc] (3,-.5) node {$3$};
\draw[color=wwzzff] (4,-.5) node {$4$};
\draw[color=qqttzz] (5,-.5) node {$5$};
\draw [fill=qqttzz,color=qqttzz] (1.,0.) circle (1.0pt);
\draw [fill=ffttcc,color=ffttcc] (2.,0.) circle (1.0pt);
\draw [fill=dcrutc,color=dcrutc] (3.,0.) circle (1.0pt);
\draw [fill=wwzzff,color=wwzzff] (4.,0.) circle (1.0pt);
\draw [fill=ffttcc,color=ffttcc] (6.,4.) circle (1.0pt);
\draw [fill=dcrutc,color=dcrutc] (9.,4.) circle (1.0pt);
\draw [fill=qqttzz,color=qqttzz] (-4.,4.) circle (1.0pt);
\draw [fill=wwzzff,color=wwzzff] (-1.,4.) circle (1.0pt);
\end{tikzpicture}

\begin{tikzpicture}[x=25,y=25]
\draw (-1.5,2.4) node {$p$};
\draw [line width=1.4pt,->] (-2,2)-- (-1,2);
\draw [line width=0.8pt,dash pattern=on 3pt off 3pt] (1.,0.)-- (5.,0.);
\draw [line width=0.8pt,dotted] (5.,0.)-- (5.,4.);
\draw [line width=0.8pt,dash pattern=on 3pt off 3pt] (5.,4.)-- (1.,4.);
\draw [line width=0.8pt,dotted] (1.,4.)-- (1.,0.);
\draw [line width=1.2pt,color=qqttzz] (4.,4.)-- (5.,3.2);
\draw [line width=1.2pt,color=ffttcc] (5.,3.)-- (2.,0.);
\draw [line width=1.2pt,color=wwzzff] (5.,2.4)-- (3.,4.);
\draw [line width=1.2pt,color=dcrutc] (5.,1.3333333333333333)-- (3.,0.);
\draw [line width=1.2pt,color=qqttzz] (5.,0.)-- (1.,3.2);
\draw [line width=1.2pt,color=ffttcc] (2.,4.)-- (1.,3.);
\draw [line width=1.2pt,color=wwzzff] (1.,2.4)-- (4.,0.);
\draw [line width=1.2pt,color=dcrutc] (5.,4.)-- (1.,1.3333333333333333);
\draw [line width=0.8pt] (4.5,0.)-- (4.5,4.);
\draw (4,4.5) node[anchor=north west] {$x=4.5$};
\draw[color=qqttzz] (1,-.5) node {$1$};
\draw[color=ffttcc] (2,-.5) node {$2$};
\draw[color=dcrutc] (3,-.5) node {$3$};
\draw[color=wwzzff] (4,-.5) node {$4$};
\draw[color=qqttzz] (5,-.5) node {$5$};
\draw [fill=qqttzz,color=qqttzz] (1.,0.) circle (1.0pt);
\draw [fill=ffttcc,color=ffttcc] (2.,0.) circle (1.0pt);
\draw [fill=dcrutc,color=dcrutc] (3.,0.) circle (1.0pt);
\draw [fill=wwzzff,color=wwzzff] (4.,0.) circle (1.0pt);
\draw [fill=ffttcc,color=ffttcc] (2.,4.) circle (1.0pt);
\draw [fill=dcrutc,color=dcrutc] (1.,4.) circle (1.0pt);
\draw [fill=qqttzz,color=qqttzz] (4.,4.) circle (1.0pt);
\draw [fill=wwzzff,color=wwzzff] (3.,4.) circle (1.0pt);
\end{tikzpicture}
\caption{Reconstructing a $\tilde{4}$-web from its right sequence couple $((2,3),(6,9))$}\label{P:Linearisation_C}
\end{figure}
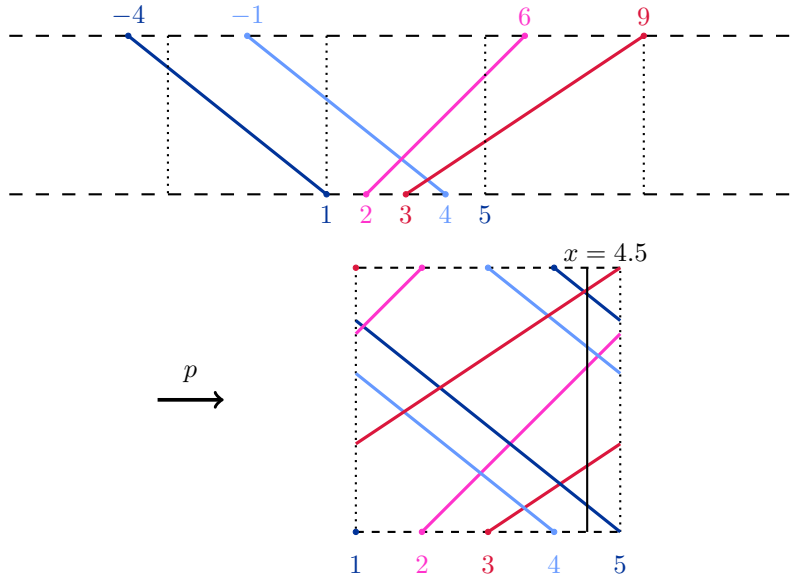
No vertical segments are necessary. Reading this diagram from bottom to top, one sees that the corresponding element of $\HKC$ is 
\[x=x_4x_3x_1x_4x_2x_1x_3x_2x_4x_3.\]
\end{exa}

\newpage %hhh
\section{Main result}

We are now ready to prove all the bijections announced in the Introduction.

\begin{thm}\label{T:Main}
Fix an $n \in \NN$. The maps $\varepsilon$, $\iota$, and $\rho\circ\pi$ described above yield bijections between
\begin{enumerate}[label=(\roman*)]
\item the elements of the Hecke--Kiselman monoid $\HKL$;
\item the set $\WHKL$ of $n$-webs;
\item the set $\II$ of increasing couples of increasing integer sequences bounded by $1$ and $n+1$.
\end{enumerate}
\end{thm}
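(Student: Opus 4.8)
The three propositions assembled in the preceding sections supply almost everything. We have that $\varepsilon_n\colon\HKL\to\DHKL$ is a bijection (Proposition~\ref{Pr:HKvsDHK}), that $\rho\circ\pi\colon\WHKL\to\II$ is a bijection (Proposition~\ref{Pr:SeqPerm}), and that $\iota\colon\WHKL\to\DHKL$ is surjective (Proposition~\ref{Pr:Disentangle}). Since two of the three maps are already bijections, the whole theorem reduces to a single missing fact: the injectivity of $\iota$, that is, the uniqueness of the disentangled web promised in Section~4. Once $\iota$ is known to be bijective, the composite $\varepsilon_n^{-1}\circ\iota$ realises (i)$\leftrightarrow$(ii), while $\rho\circ\pi$ realises (ii)$\leftrightarrow$(iii), and the theorem follows at once.

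To prove injectivity of $\iota$ the plan is to use the faithful action advertised in the Introduction, namely the action of Section~2 built from Example~\ref{EX:NiceAction} with $A=\ZZ$ and $f_i=\Id_{\ZZ}$ for all $i$: here $x_i$ acts on $\ZZ^{n+1}$ by overwriting the $(i+1)$-st coordinate with the $i$-th. Fix the vector $v_0=(1,2,\ldots,n+1)$ and, for $w\in\HKL$, record $f_w:=w\cdot v_0$. Because this action is well defined on $\HKL$ and $\varepsilon_n$ is an isomorphism, $f_w$ depends only on the class of $w$ in $\DHKL$; in particular, for an $n$-web $w$ the vector $f_w$ depends only on $\iota(w)$. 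I would then compute $f_w$ \emph{directly} from the combinatorics of the web and read the right sequence couple back off it.

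The key computation is the following claim: if $w$ is an $n$-web with right sequence couple $(a_t,b_t)$, then $f_w$ is the order-preserving, order-decreasing map determined by $f_w(b_t)=a_t$ on the top endpoints of the right strands, the remaining values being forced by monotonicity (it equals $j$ on the trivial strands, and is filled in by the nearest preceding right value otherwise). I would prove this by presenting $w$ through its trivial/right/left strands as in Proposition~\ref{Pr:Webs}, choosing the associated word, and tracking the leftward value-flow crossing by crossing (an induction on the number of crossings). Granting the claim, the couple $(a_t,b_t)$ is recovered from $f_w$ as the pairs $(v,\beta)$ where $v$ lies in the image of $f_w$, $\beta$ is the largest coordinate with $f_w(\beta)=v$, and $v<\beta$; hence $f_w$ determines the right couple, which by Proposition~\ref{Pr:SeqPerm} determines $w$. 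Combined with the $\DHKL$-invariance of $f_w$ this yields $\iota(w)=\iota(w')\Rightarrow f_w=f_{w'}\Rightarrow w=w'$, i.e.\ injectivity.

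The main obstacle is exactly this computation of $f_w$. The delicate point is to avoid circularity: one must evaluate the action on an explicitly chosen word for the web, which is legitimate by well-definedness, rather than routing it through the as-yet-unestablished bijection $\DHKL\cong\HKL\cong CM_{n+1}$, and then push the leftward-copy bookkeeping through an arbitrary web. A shorter but less self-contained alternative, if one imports $\lvert\HKL\rvert=\lvert CM_{n+1}\rvert=C_{n+1}$ from \cite{So,GaMa} together with $\lvert\II\rvert=C_{n+1}$ from the lattice-path interpretation of the preceding Remark, is simply to observe that $\iota$ is a surjection between finite sets of equal cardinality $C_{n+1}$ and is therefore a bijection. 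The action-based argument is preferable in that it keeps the proof self-contained and provides the template that will be reused for $\HKC$.
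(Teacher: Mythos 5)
Your proposal follows the paper's proof essentially verbatim: the theorem is reduced to the injectivity of $\iota$, which is established by letting words act on $(1,2,\ldots,n+1)$ via the chain $\sigma_i(a,b)=(a,a)$ of Example~\ref{EX:NiceAction}, computing the resulting vector of an $n$-web directly from its strands, and reading the right sequence couple back off it exactly as you describe, so that your $w\mapsto f_w$ is the paper's $\varphi$ and your key claim is its Lemma $\varphi\circ\varepsilon^{-1}\circ\iota=\rho\circ\pi$. The only quibble is cosmetic: in your fill-in rule the value at a position $j$ between $b_{t-1}$ and $b_t$ is $a_t$, i.e.\ it comes from the right/trivial strand ending at the nearest top endpoint \emph{to the right} (and is not forced by monotonicity alone), but your recovery rule and the induction you outline are exactly the paper's argument.
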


Most parts of the theorem were treated in Propositions \ref{Pr:HKvsDHK}, \ref{Pr:Disentangle}, and \ref{Pr:SeqPerm}. The bijections and surjections established there are summarised in the following diagram:
\[\xymatrix@!0 @R=1.2cm @C=2.5cm{
\HKL \ar[rr]^{\varepsilon}_{1:1} \ar@{.>}[d]_{\varphi}  &&\DHKL\\
\II &\Perm \ar[l]_{\rho}^{1:1} &\WHKL \ar@{->>}[u]_{\iota} \ar[l]_{\pi}^{1:1}
}\]
The algebraic/diagrammatic part of the story is on the left/right respectively.

We will now describe a map $\varphi \colon \HKL \to \II$ making the above diagram commute, in the sense of $\varphi \circ\varepsilon^{-1} \circ\iota = \rho\circ\pi$. The injectivity, and hence the bijectivity, of $\iota$ follow and complete the proof of the theorem. As a by-product, we obtain an explicit bijection $\varphi$, which can be realised as an elementary algorithm linear in $\max\{n, \operatorname{length}(x)\}$, where $x \in \HKL$ is seen as a word in the generators $x_i$.

The map $\varphi$ is constructed using the $\HKL$-chain $\sigma_i(a,b)=(a,a)$ on the set $\NN$; cf. Example~\ref{EX:NiceAction}. Let us look at how an $x \in \HKL$ acts on $(1,2,\ldots, n+1) \in \NN^{n+1}$. Each generator $x_i$ either has no effect, or propagates some element $a$ to the right, i.e., replaces the right neighbour $b>a$ of some $a$ by $a$. Now, consider only the $a$'s present in the result $y=x \cdot (1,2,\ldots, n+1)$ of this action. Order them to get the sequence $(a_t)$. Denote by $b_t$ the place of the rightmost occurrence of $a_t$ in $y$. Throw away the couples with $b_t=a_t$. Since the propagation happens only to the right, we get a sequence couple $((a_t),(b_t)) \in \II$, declared to be $\varphi(x)$.

\begin{lem}
One has $\varphi\circ \varepsilon^{-1}\circ \iota = \rho\circ\pi$.
\end{lem}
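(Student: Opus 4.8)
The plan is to show that the two maps $\varphi\circ\varepsilon^{-1}\circ\iota$ and $\rho\circ\pi$, both going from $\WHKL$ to $\II$, agree on every $n$-web $w$. Since $\rho\circ\pi(w)$ is by definition the right sequence couple of $w$, the task reduces to computing $\varphi\circ\varepsilon^{-1}\circ\iota(w)$ and recognising it as the same couple. First I would pick a representative word: take the $n$-web $w$, present it (via the $\tilde{n}$-analog, here the linear, Proposition~\ref{Pr:Webs}\ref{it3}) as a composition of elementary diagrams $d_{i_1}\cdots d_{i_m}$ in which every crossing has a right strand going right and a left strand going left, and set $x=\varepsilon^{-1}(\iota(w))=x_{i_1}\cdots x_{i_m}\in\HKL$. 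Then $\varphi$ is computed by letting $x$ act on $(1,2,\ldots,n+1)\in\NN^{n+1}$ through the chain $\sigma_i(a,b)=(a,a)$, i.e. each $x_i$ copies the content of position $i$ into position $i+1$ (the ``propagate to the right'' operation).

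The heart of the argument is to track this action alongside the diagram $w$, and the key observation I would establish is an invariant: \emph{at every height, the value sitting in position $j$ of the acted-upon tuple equals the $x$-coordinate of the lower endpoint of the strand currently occupying the $j$-th horizontal slot.} Initially this holds, since strand $j$ starts at $(j,0)$ and position $j$ holds $j$. Each elementary crossing $d_i$ swaps the two strands in slots $i,i+1$; because we arranged that the right strand goes right and the left strand goes left, passing a crossing upward moves a right strand from slot $i$ to slot $i+1$, and its lower-endpoint label is exactly the value $a$ that $\sigma_i$ copies rightward. Thus the action faithfully records, slot by slot, the lower-endpoint labels of the strands as we read $w$ from bottom to top, and the invariant is preserved by every letter.

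Granting the invariant, I would read off the final tuple $y=x\cdot(1,\ldots,n+1)$: position $j$ holds the lower-endpoint label of the strand ending at $(j,1)$, i.e.\ the value $i$ with $p(w)(i)=j$, equivalently $y_j=p(w)^{-1}(j)$. Now $\varphi$ collects the distinct values $a$ appearing in $y$, ordered, and for each records $b_t=$ position of its rightmost occurrence, discarding the cases $b_t=a_t$. A value $a$ appears in $y$ in positions $p(w)(a),\ldots$ — more precisely $a$ occupies its own final slot $p(w)(a)$ and, by the rightward-propagation, all slots between $a$ and $p(w)(a)$ that are not overwritten by a larger propagated value; the \emph{rightmost} occurrence of $a$ is at slot $p(w)(a)$. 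Hence $b_t=p(w)(a_t)$, and $b_t=a_t$ exactly when $a$ is a fixed point or a left strand (no rightward motion). The surviving couples are therefore precisely the $i$ with $p(w)(i)>i$ together with $b_t=p(w)(i)$ — which is exactly the definition of the right sequence couple $r(p(w))=\rho\circ\pi(w)$.

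The main obstacle I anticipate is making the invariant claim fully rigorous, specifically verifying that the \emph{rightmost} occurrence of each value $a$ lands at slot $p(w)(a)$ and not earlier. This requires checking that no strand with a larger lower-endpoint label overwrites slot $p(w)(a)$ after $a$ arrives there; this is where the absence of triangles (the $321$-avoiding, web condition) is essential, since it controls how right strands can cross and thereby prevents a later large-value propagation from clobbering the final slot of $a$. I would isolate this as the one genuine lemma, proving it by following a single right strand upward and using that, once it reaches its terminal slot, every subsequent crossing in that slot involves a left strand (which copies nothing leftward under $\sigma_i(a,b)=(a,a)$, it only reads position $i$). Everything else — the bookkeeping of $\varphi$ and the identification with $r$ — is then routine.
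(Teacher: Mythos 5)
Your overall strategy is the one the paper uses: let $x=\varepsilon^{-1}(\iota(w))$ act on $(1,2,\ldots,n+1)$ via the chain $\sigma_i(a,b)=(a,a)$ while reading $w$ from bottom to top, then identify the output $y$ with the right sequence couple. However, the invariant at the heart of your argument is false, and the formula you derive from it, $y_j=p(w)^{-1}(j)$, is wrong. The operator $\sigma_i$ does not \emph{swap} the entries in positions $i$ and $i+1$: it \emph{copies} position $i$ into position $i+1$ and destroys the old entry there. So after a crossing $d_i$ the left strand occupies slot $i$, yet slot $i$ of the tuple still carries the \emph{right} strand's label; the claim ``position $j$ holds the label of the strand currently in slot $j$'' already fails after one letter. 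Concretely, for $n=1$ and $w=d_1$ one gets $y=(1,1)$, whereas your formula predicts $(2,1)$. In general $y$ is not a permutation at all but the staircase in which $a_t$ fills the whole block of positions $b_{t-1}+1,\ldots,b_t$, where $((a_t),(b_t))$ records the endpoints of the right \emph{and trivial} strands. Your third paragraph tacitly abandons the invariant and describes multiple occurrences of each value, but this is inconsistent with $y_j=p(w)^{-1}(j)$, and it leaves unproved the two facts the computation of $\varphi$ actually needs: (a) the labels of left strands do not survive into $y$ at all (with your formula a left strand with $p(c)<c$ would contribute an illegal couple $(c,p(c))$ that is not discarded), and (b) the rightmost occurrence of $a_t$ is exactly $b_t=p(w)(a_t)$.

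The invariant you want is one-sided: \emph{every right or trivial strand always sits in a slot whose entry is its own lower-endpoint label}. This is preserved because, by Proposition~\ref{Pr:Webs}\ref{it3}, every crossing of a web pairs a right strand (slot $i$, whose entry is copied into slot $i+1$, exactly where that strand moves) with a left strand; consequently the only overwriting operation, $\sigma_i$ acting on slot $i+1$, only ever hits a slot occupied by a left strand. Combining this with the observation that each left strand meets at least one crossing as the left member (so its original entry is overwritten and, since no $\sigma_c$ can fire while a left strand still sits at its starting slot $c$, that entry is never copied out beforehand) yields the paper's description of $y$ as the staircase above, from which (a) and (b) and your final bookkeeping follow. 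As written, though, your proof rests on a false statement and does not go through.
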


\begin{proof}
One needs to understand $y=x \cdot (1,2,\ldots, n+1)$ for an $x$ corresponding to an $n$-web $w$. Let $((a_t),(b_t))$ be the non-left sequence couple of $w$, that is, we record the endpoints of right and trivial strands. Then $y$ contains only the $a_t$'s, and the element $a_t$ occupies in $y$ the positions $b_{t-1}+1,\ldots, b_t$ (we put $b_0=0$). Throwing  away the couples with $b_t=a_t$, one gets, on the one hand, the right sequence couple of $w$, which is $\rho\circ\pi(w)$; and, on the other hand, $\varphi(y)= \varphi\circ \varepsilon^{-1} \circ\iota(w)$.
\end{proof}

Similarly, in the case $\HKC$ we get

\begin{thm}\label{T:Main_C}
Fix an $n \in \NN$. The maps $\tilde{\varepsilon}$, $\tilde{\iota}$, and $\tilde{\varrho}$ described above yield bijections between
\begin{enumerate}[label=(\roman*)]
\item the elements of the Hecke--Kiselman monoid $\HKC$;
\item the set $\WHKC$ of $\tilde{n}$-webs;
\item the set $\IIC$ of $n$-close increasing couples of increasing integer sequences.
\end{enumerate}
\end{thm}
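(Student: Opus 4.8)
The plan is to run the same argument that closed the linear Theorem~\ref{T:Main}. Propositions~\ref{Pr:HKvsDHK_C}, \ref{Pr:Disentangle_C}, and~\ref{Pr:SeqPerm_C} already supply all but one ingredient: $\tilde{\varepsilon}$ is bijective, $\tilde{\iota}$ is surjective, and $\tilde{\varrho}$ is bijective. The only thing missing is the \emph{injectivity} of $\tilde{\iota}$, which I would extract from the commutativity of
\[\xymatrix@!0 @R=1.2cm @C=2.5cm{
\HKC \ar[rr]^{\tilde{\varepsilon}}_{1:1} \ar@{.>}[d]_{\tilde{\varphi}} && \DHKC \\
\IIC && \WHKC \ar@{->>}[u]_{\tilde{\iota}} \ar[ll]^{\tilde{\varrho}}_{1:1}
}\]
for a map $\tilde{\varphi}\colon\HKC\to\IIC$ to be constructed, the claim being $\tilde{\varphi}\circ\tilde{\varepsilon}^{-1}\circ\tilde{\iota}=\tilde{\varrho}$. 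Granting this, since $\tilde{\varrho}$ is injective and factors as $(\tilde{\varphi}\circ\tilde{\varepsilon}^{-1})\circ\tilde{\iota}$, the first-applied map $\tilde{\iota}$ is injective; together with Proposition~\ref{Pr:Disentangle_C} this makes $\tilde{\iota}$ bijective, whence $\tilde{\varepsilon}$, $\tilde{\iota}$, $\tilde{\varrho}$ are all bijections and the theorem follows. As a by-product $\tilde{\varphi}=\tilde{\varrho}\circ\tilde{\iota}^{-1}\circ\tilde{\varepsilon}$ is a bijection, which records the faithfulness of the underlying action announced in the introduction.

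To define $\tilde{\varphi}$ I would use the faithful $\HKC$-chain singled out in Example~\ref{EX:NiceAction}: on $A=\ZZ$ put $\sigma_i(a,b)=(a,f_i(a))$ with $f_i=\Id_{\ZZ}$ for $i<n$ and $f_n(a)=a+1$, and let $\HKC$ act on $\ZZ^n$ as in the Proposition of Section~2 (this is well defined on monoid elements, so $\tilde{\varphi}$ is a genuine function of $x$, not of any chosen word). Acting by $x$ on the base tuple $(1,2,\ldots,n)$ propagates values to the right, the only increment being the $+1$ produced by $f_n$ each time a value is carried past position $n$ back to position $1$. I then read a sequence couple off the resulting tuple $y=x\cdot(1,\ldots,n)$, mirroring the linear recipe behind $\varphi$: the surviving entries determine the lower endpoints $a_t$, while their terminal positions, \emph{augmented by $n$ times the winding recorded in the magnitudes of the entries}, determine the $b_t$; discarding the pairs with $b_t=a_t$ leaves an element of $\IIC$. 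The point is that one entry is incremented by exactly $1$ at each elementary step, so the $\IIC$-inequalities $b_1<\cdots<b_k<b_1+n$ are forced just as the monotonicity was forced in the proof of Proposition~\ref{Pr:SeqPerm_C}.

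The core of the proof, and the step I expect to be the main obstacle, is the identity $\tilde{\varphi}\circ\tilde{\varepsilon}^{-1}\circ\tilde{\iota}=\tilde{\varrho}$, i.e. the $\tilde{n}$-analogue of the lemma $\varphi\circ\varepsilon^{-1}\circ\iota=\rho\circ\pi$. For $x$ corresponding to an $\tilde{n}$-web $w$ I would write $w$, via the $\tilde{n}$-analogue of Proposition~\ref{Pr:Webs}, as a composition of elementary diagrams in which right strands always go right and left strands always go left, and follow $y$ crossing by crossing. The evolution of the carried values then matches \emph{verbatim} the evolution of the $(b_t)$ described in the proof of Proposition~\ref{Pr:SeqPerm_C}, where $b_u$ rises by $1$ at every crossing and the $i=n$ wrap is handled by the $s(a_u)\colon n\mapsto1$ together with $h_u\mapsto h_u+1$. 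The genuinely new difficulty, absent in the linear case, is the cylinder's wrap-around: the constant-value intervals close up cyclically, there is no canonical leftmost strand to anchor an induction, and one must reconcile the action's $+1$-per-wrap with the $+n$-per-wrap hidden in the encoding $b_t=s(a_t)+n\,h_t$. The clean device for this is to lift to the universal cover $p\colon\RR\times I\to\SSS^1\times I$ used for $\tilde{\varrho}^{-1}$: there each right strand runs monotonically from $(a_t,0)$ to $(b_t,1)$, so the accumulated increments translate into honest horizontal displacement and the matching of $\tilde{\varphi}(x)$ with the right sequence couple $\tilde{\varrho}(w)$ becomes transparent.
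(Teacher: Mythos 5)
Your overall architecture is exactly the paper's: combine Propositions~\ref{Pr:HKvsDHK_C}, \ref{Pr:Disentangle_C}, \ref{Pr:SeqPerm_C}, build a map from $\HKC$ to sequence couples out of a Yang--Baxter-like action on $\NN^n$, prove the commutativity $\tilde{\varphi}\circ\tilde{\varepsilon}^{-1}\circ\tilde{\iota}=\tilde{\varrho}$, and deduce the injectivity of $\tilde{\iota}$. (A minor divergence: the paper first lands in the set of \emph{all} sequence couples and only gets $\operatorname{Im}\subseteq\IIC$ a posteriori from the surjectivity of $\tilde{\iota}$, whereas you claim the $\IIC$-inequalities directly; that is a harmless reordering.)

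The genuine gap is in your choice of chain. You take $f_n(a)=a+1$ (the Example~\ref{EX:NiceAction} action), so after acting on $(1,2,\ldots,n)$ an entry of $y$ produced by the right strand with data $(a_t,h_t)$ has magnitude $a_t+h_t$ (or an intermediate $a_t+h$ with $h\leqslant h_t$). From such a value neither $a_t$ nor the winding $h_t$ is recoverable: $a_t+h_t$ does not determine the pair, and the entries of $y$ are \emph{not} the lower endpoints $a_t$, contrary to what your reading-off recipe assumes. Concretely, for the word $x= x_4x_3x_1x_4x_2x_1x_3x_2x_4x_3$ of Example~\ref{EX:4web} (right sequence couple $((2,3),(6,9))$, so $a=(2,3)$, $h=(1,2)$), the $+1$ action yields $y=(5,3,4,4)$; the value $5$ admits the decompositions $(4,1),(3,2),(2,3),(1,4)$, the value $4$ is merely an intermediate deposit of the second strand, and no local rule extracts $((2,3),(6,9))$ from $y$. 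Passing to the universal cover does not help, because the mismatch is between the $+1$-per-wrap bookkeeping of the action and the $+n$-per-wrap encoding $b_t=s(a_t)+n h_t$, and the lift does not add information to $y$. The fix is exactly the paper's: take $\sigma_n(a,b)=(a,a+n)$ instead. Then each entry of $y$ equals $a_t+n h$, so its residue mod $n$ recovers $a_t$, its quotient recovers the winding, the maximal representative of $a_t$ in $y$ sits at position $s(a_t)=b_t\bmod n$, and your ``matches verbatim the evolution of the $(b_t)$'' argument becomes literally true: at each elementary crossing the tracked quantity $s(a_u)+n h_u$ increases by exactly $1$, including at the wrap ($s\colon n\mapsto 1$, $nh_u\mapsto nh_u+n$). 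With that single substitution your proof goes through and coincides with the paper's.
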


Indeed, Propositions \ref{Pr:HKvsDHK_C}, \ref{Pr:Disentangle_C}, and \ref{Pr:SeqPerm_C} yield
\[\xymatrix@!0 @R=1.2cm @C=2.5cm{
&\HKC \ar[r]^{\tilde{\varepsilon}}_{1:1} \ar@{.>}[d]_{\tilde{\varphi}}  \ar@{.>}[dl]_{\tilde{\psi}}  &\DHKC\\
\operatorname{SC}& \IIC \ar@{_{(}->}[l]_{\eta} &\WHKC \ar@{->>}[u]_{\tilde{\iota}} \ar[l]_{\tilde{\varrho}}^{1:1}
}\]
We will now describe a map $\tilde{\psi} \colon \HKC \to \operatorname{SC}$, where $\operatorname{SC}$ is the set of all couples of integer sequences. Denoting by $\eta \colon \IIC \hookrightarrow \operatorname{SC}$ the obvious injection, we will then prove that $\tilde{\psi} \circ \tilde{\varepsilon}^{-1} \circ \tilde{\iota} = \eta \circ \tilde{\varrho}$. The injectivity, and hence the bijectivity, of $\tilde{\iota}$ follow and complete the proof of the theorem. As a by-product, we get $\operatorname{Im}(\tilde{\psi}) \subseteq \IIC$, so $\tilde{\psi}$ induces a map  $\tilde{\varphi} \colon \HKC \to \IIC$ satisfying $\tilde{\varphi} \circ \tilde{\varepsilon}^{-1} \circ \tilde{\iota} = \tilde{\varrho}$. The bijectivity of $\tilde{\iota}$ implies that we obtain an explicit bijection $\tilde{\varphi}$, which can be realised as an elementary algorithm linear in $\max\{n, \operatorname{length}(x)\}$, where $x \in \HKC$ is seen as a word in the generators $x_i$.

The map $\tilde{\psi}$ is constructed using the $\HKC$-chain $\sigma_i(a,b)=(a,a)$ for $i<n$, and $\sigma_n(a,b)=(a,a+n)$, on the set $\NN$; cf. Example~\ref{EX:NiceAction}. Let us look at how an $x \in \HKC$ acts on $(1,2,\ldots, n) \in \NN^{n}$. Each generator $x_i$ either has no effect, or propagates some element $a$ to the right, or replaces the first element with the last element $+n$. Now, consider only those $a \in \{1,2,\ldots,n\}$ which coincide $\operatorname{mod} n$ with at least one entry of $y=x \cdot (1,2,\ldots, n)$. Order them to get the sequence $(a_t)$. For each $a_t$, consider the maximal number $m_t$ coinciding with $a_t$ $\operatorname{mod} n$ and occurring in $y$, and denote by $s(a_t)$ the place of the rightmost occurrence of $m_t$ in $y$. Decompose $m_t$ as $a_t+n*h_t$, and put $b_t=s(a_t)+n*h_t=s(a_t)+m_t-a_t$. Throw away the couples with $b_t=a_t$. Declare the resulting sequence couple $((a_t),(b_t))$ to be $\tilde{\psi}(x)$.

\begin{exa}
Let us evaluate $\tilde{\psi}$ on the element $x= x_4x_3x_1x_4x_2x_1x_3x_2x_4x_3 \in \mathsf{C}_4$ from Example~\ref{EX:4web}. We need to compute $y=x \cdot (1,2,3,4)$:
\begin{align*}
(1,2,3,4) &\overset{x_3}{\mapsto}(1,2,3,3)\overset{x_4}{\mapsto}(7,2,3,3)\overset{x_2}{\mapsto}(7,2,2,3)\overset{x_3}{\mapsto}(7,2,2,2)\\
&\overset{x_1}{\mapsto}(7,7,2,2)\overset{x_2}{\mapsto}(7,7,7,2)\overset{x_4}{\mapsto}(6,7,7,2)\overset{x_1}{\mapsto}(6,6,7,2)\overset{x_3}{\mapsto}(6,6,7,7)\\
&\overset{x_4}{\mapsto}(11,6,7,7)=y.
\end{align*}
Modulo $4$, this yields $(3,2,3,3)$, which contains only $2$ and $3$. Thus $a_1=2, \ a_2=3, \ m_1=6=2+4*1, \ m_2=11=3+4*2,\ h_1=1,\ h_2=2$. Further, $s(a_1)=s(2)$ is the rightmost occurrence of $m_1=6$ in $y$, which is $2$, and $s(a_2)=s(3)$ is the rightmost occurrence of $m_2=11$ in $y$, which is $1$. Finally, $b_1=s(a_1)+m_1-a_1=2+6-2=6, \ b_2=s(a_2)+m_2-a_2=1+11-3=9$. This is the expected result, since in Example~\ref{EX:4web} $x$ was constructed out of the $\tilde{4}$-web $\tilde{\varrho}^{-1}((2,3),(6,9))$.
\end{exa}

\begin{lem}
One has $\tilde{\psi} \circ \tilde{\varepsilon}^{-1} \circ \tilde{\iota} = \eta \circ \tilde{\varrho}$.
\end{lem}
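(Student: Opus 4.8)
The plan is to mirror the proof of the linear lemma, computing $y = x\cdot(1,2,\ldots,n)$ for the element $x = \tilde{\varepsilon}^{-1}\circ\tilde{\iota}(w) \in \HKC$ attached to an $\tilde{n}$-web $w$, and reading the right sequence couple off $y$. First I would fix a representative of $w$ decomposed into elementary diagrams $\tilde{d}_i$ in which, at every crossing, the right strand goes right and the left strand goes left, as supplied by the $\tilde{n}$-analogue of Proposition~\ref{Pr:Webs}. Since $\tilde{\varrho}(w)$ is by definition the right sequence couple of $w$, and $\eta$ is merely an inclusion, it suffices to show that the couple $((a_t),(b_t))$ extracted from $y$ by the recipe defining $\tilde{\psi}$ coincides with that right sequence couple.

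Next I would analyse the action along the diagram from bottom to top. At a non-seam crossing $\tilde{d}_i$ ($i<n$) the map $\sigma_i(a,b)=(a,a)$ leaves the right strand's value unchanged and overwrites the left strand with it; at the seam crossing $\tilde{d}_n$ the rule $\sigma_n(a,b)=(a,a+n)$ does the same but additionally increases the right strand's value by $n$. Hence the right strand issued from $a_t$ carries the value $a_t + n\cdot(\text{number of seam crossings so far})$, which is non-decreasing and equals $a_t+n h_t$ at the top, $h_t$ being its winding number; a left strand carries the value last deposited on it by a right strand; and a trivial strand keeps its initial value. Values carried by distinct right or trivial strands have distinct residues modulo $n$, since starting positions lie in $\{1,\ldots,n\}$ and are pairwise distinct. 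As every left strand crosses at least one right strand, its original value is overwritten; I conclude that the residues modulo $n$ occurring in $y$ are exactly the starting positions of the right and trivial strands.

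It then remains to identify each $b_t$. All entries of $y$ congruent to a fixed right starting position $a_t$ modulo $n$ come from the single strand issued from $a_t$, so the maximal such entry is $m_t = a_t + n h_t$. The key point is that the rightmost occurrence of $m_t$ sits at the upper endpoint $s(a_t)$ of that strand. I would prove this in the universal cover $p\colon \RR\times I \to \SSS^1\times I$: a left strand on which the value $m_t$ survives to the top must meet the $a_t$-strand above the latter's last seam crossing and have no crossing above this meeting (otherwise it would be overwritten, and only the $a_t$-strand produces values $\equiv a_t$), hence cannot cross the seam afterwards; its lifted endpoint therefore lies in the same period as, and strictly to the left of, $b_t = s(a_t)+n h_t$, so its cylinder endpoint is $<s(a_t)$. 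Thus $s(a_t)$ is indeed the rightmost occurrence, and $\tilde{\psi}$ returns $b_t = s(a_t)+m_t-a_t = s(a_t)+n h_t$, matching $\tilde{\varrho}(w)$. A trivial strand gives $m=a$, $s(a)=a$, and $b=a$, so it is discarded on both sides, leaving precisely the right strands.

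The main obstacle is exactly the rightmost-occurrence claim of the previous paragraph: a priori the deposited copies of $m_t$ could be dragged by winding left strands to positions to the right of $s(a_t)$. The universal-cover argument, together with the observation that a left strand still carrying $m_t$ at the top can have no crossing---and in particular no seam crossing---above its meeting with the $a_t$-strand, is what rules this out. The remaining bookkeeping is routine and parallels the linear case.
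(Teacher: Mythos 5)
Your proposal is correct and takes essentially the same route as the paper: both trace the action of $\sigma_i(a,b)=(a,a)$ and $\sigma_n(a,b)=(a,a+n)$ along a representative of the $\tilde{n}$-web in which right strands always go right, observe that values propagate along right strands and that the $+n$ increments occur exactly at seam crossings, and conclude that the data extracted by $\tilde{\psi}$ is the right sequence couple. The only difference is one of packaging: the paper records the full block structure of $y$ (positions $b_{t-1}+1,\ldots,b_t$ carry $a_t$ modulo $n$) and leaves the ``rightmost occurrence of the maximal value'' identification implicit, whereas you argue that point explicitly via the universal cover --- a detail worth spelling out, but not a different proof.
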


\begin{proof}
One needs to understand $y=x \cdot (1,2,\ldots, n)$ for an $x$ corresponding to an $\tilde{n}$-web $w$. Let $((a_t),(b_t))$ be the non-left sequence couple of $w$, that is, we record the endpoints of right and trivial strands. Modulo $n$, $y$ contains only the $a_t$'s, and the element $a_t$ occupies in $y$ the positions $b_{t-1}+1,\ldots, b_t$ if $t>1$, and $b_{k}+1,\ldots, b_1+n$ if $t=1$. Here $k$ is the size of $(a_t)$. To get from integers modulo $n$ to integers, observe that in the propagation story the $+n$ phenomenon happens only when an element propagates from the last position to the first one, that is, when the generator $x_n$ is applied, which in an $\tilde{n}$-web corresponds to the elementary diagram $\tilde{d}_n$, which is the only case when a right strand crosses the $x=n+\frac{1}{2}$ line, which in its turn is the only situation when the corresponding $b_t$ is augmented by $n$. Thus, throwing  away the couples with $b_t=a_t$ as usual, one gets, on the one hand, the right sequence couple of $w$, which is $\tilde{\varrho}(w)$; and, on the other hand, $\tilde{\psi}(y)= \tilde{\psi}\circ \tilde{\varepsilon}^{-1} \circ \tilde{\iota}(w)$.
\end{proof}

\bigskip
\subsection*{Acknowledgments}
This paper originated from discussions with Magdalena Wiertel during her stay in Caen. The author is grateful to her for a detailed introduction into the subject.
% Okninski?
% The author is grateful to the reviewer for constructive suggestions and remarks.

\bibliographystyle{alpha}
\bibliography{refs}
\end{document}